
\documentclass[preprint,12pt]{elsarticle}




\usepackage{amssymb}
\usepackage{amsthm}
\usepackage{amsmath,amsfonts}
\usepackage{graphicx}
\usepackage{geometry}
\usepackage{wrapfig}
\usepackage{textcomp}
\usepackage{cleveref}
\usepackage{subfig}
\usepackage{tabularx}
\usepackage{bm}
\usepackage{caption}
\usepackage{parskip}
\usepackage{url}
\usepackage{tikz}
\usetikzlibrary{shapes.geometric, arrows}
\tikzstyle{startstop} = [rectangle, rounded corners, 
minimum width=3cm, 
minimum height=1cm,
text centered, 
draw=black, 
fill=red!30]

\tikzstyle{io} = [rectangle,  
minimum width=2cm, 
minimum height=1cm, text centered, text width=3cm,
draw=black, fill=blue!30]

\tikzstyle{process} = [rectangle, 
minimum width=3cm, 
minimum height=1cm, 
text centered, 
text width=3cm, 
draw=black, 
fill=orange!30]

\tikzstyle{decision} = [diamond, 
minimum width=3cm, 
minimum height=1cm, 
text centered, 
draw=black, 
fill=green!30]
\tikzstyle{arrow} = [thick,->,>=stealth]

\biboptions{sort&compress}

\usepackage{titlesec}

\usepackage{subfig}

\titlespacing*{\section}
 {0pt}{0.75\baselineskip}{0.75\baselineskip}
\titlespacing*{\subsection}
 {0pt}{0.5\baselineskip}{0.5\baselineskip}
\def\BibTeX{{\rm B\kern-.05em{\sc i\kern-.025em b}\kern-.08em
    T\kern-.1667em\lower.7ex\hbox{E}\kern-.125emX}}
\allowdisplaybreaks
 \usepackage{etex}
 
\usepackage{lipsum}
\usepackage{amsfonts}
\usepackage{graphicx}
\usepackage{epstopdf} 
\usepackage{enumerate} 
 \usepackage{amsmath}  
 \usepackage{subfig,bbm,enumerate,multirow,framed,blkarray}

\usepackage{verbatim,tcolorbox}
  \usepackage{tikz}
\usetikzlibrary{shapes,arrows}
\usetikzlibrary{positioning}  
\usepackage{pstricks-add} 
 \usepackage{wrapfig}
 \usepackage{pgfplots}
  
\usepackage{color}
\usepackage{savesym}
 \usepackage{algpseudocode} 

 \usepackage{graphicx}          




\newtheorem{theorem}{Theorem}[section]

\newtheorem{remark}[theorem]{Remark}
 
\newtheorem{example}[theorem]{Example}


\newcommand{\conv}{\text{conv}}

	\algnewcommand\And{\textbf{and }}
	\algnewcommand\Or{\textbf{or }}

\newcommand{\xnot}{\x_0}

\newcommand{\mumu}{\pmb{\mu}}

\newcommand{\lshiftmin}{\mathbf{slmin}}

\newcommand{\coord}{\mathbf{e}}


\newcommand{\real}{\mathbb R}
\newcommand{\posint}{\mathbb{N}}

\newcommand{\etaeta}{\pmb{\eta}}

\newcommand{\uu}{\mathbf{u}}
\newcommand{\vv}{\mathbf{v}}

\newcommand{\f}{\mathbf{f}}

\newcommand{\F}{\mathbf{F}}

\newcommand{\g}{\mathbf{g}}

\newcommand{\h}{\mathbf{h}}
\newcommand{\x}{\mathbf{x}}

\newcommand{\y}{\mathbf{y}}
\newcommand{\M}{\mathbf{M}}

\newcommand{\II}{\mathbf{I}}

\newcommand{\capy}{\mathbf{Y}}

\newcommand{\pp}{\mathbf{p}} 

\newcommand{\zero}{\mathbf{0}}

\newcommand{\Jf}{\mathbf{Jf}}
\newcommand{\J}{\mathbf{J}}

\algnewcommand{\algorithmicgoto}{\textbf{go to}}%
\algnewcommand{\Goto}[1]{\algorithmicgoto~\ref{#1}}%
 
\newenvironment{bsmallmatrix}{\left[\begin{smallmatrix}}{\end{smallmatrix}\right]}

\newcommand{\ha}[1]{\textcolor{black}{#1}}

\newcommand{\hesham}[1]{\mycomment}{}
\newcommand{\ps}[1]{\textcolor{black}{#1}}
\newcommand{\pgs}[1]{\textcolor{black}{#1}}

\newtheorem{thm}{Theorem}

\newdefinition{rmk}{Remark}
\newproof{pf}{Proof}
\newproof{pot}{Proof of Theorem \ref{thm2}}

\journal{Journal of the Franklin Institute
}

\begin{document}

\begin{frontmatter}



\title{A New Nonsmooth Optimal Control Framework for Wind Turbine Power Systems}

\author[inst1]{Hesham Abdelfattah}
\ead{abdelfhm@mail.uc.edu}
\author[inst1]{*Sameh A. Eisa}
\ead{eisash@ucmail.uc.edu}
\affiliation[inst1]{organization={Department of Aerospace Engineering and Engineering Mechanics, University of Cincinnati},
            addressline={2600 Clifton Ave}, 
            city={Cincinnati},
            postcode={45221}, 
            state={OH},
            country={USA}}

\author[inst2]{Peter Stechlinski \corref{cor1}}
\ead{peter.stechlinski@maine.edu}
\affiliation[inst2]{organization={Department of Mathematics and Statistics, University of Maine},
            city={Orono},
            postcode={04469}, 
            state={Maine},
            country={USA}}
\tnotetext[t1]{This material is based upon work supported by the National Science Foundation under Award No. 2318772 and 2318773.
}

\begin{abstract}
Optimal control theory extending from the calculus of variations has not been used to study the wind turbine power system (WTPS) control problem, which aims at achieving two targets: (i) maximizing power generation in lower wind speed conditions; and (ii) maintaining the output power at the rated level in high wind speed conditions. A lack of an \textit{optimal control} framework for the WTPS (i.e., no access to actual optimal control trajectories) reduces optimal control design potential and prevents competing control methods of WTPSs to have a reference control solution for comparison. In fact, the WTPS control literature often relies on reduced and linearized models of WTPSs, and avoids the nonsmoothness present in the system during transitions between different conditions of operation. In this paper, we introduce a novel optimal control framework for the WTPS control problem. We use in our formulation a recent accurate, nonlinear differential-algebraic equation (DAE) model of WTPSs, which we then generalize over all wind speed ranges using nonsmooth functions. We also use developments in nonsmooth optimal control theory to take into account nonsmoothness present in the system. We implement this new WTPS optimal control approach to solve the problem numerically, including  (i) different wind speed profiles for testing the system response; (ii) real-world wind data; and (iii) a comparison with  smoothing and naive approaches. Results show the effectiveness of the proposed approach. 

\end{abstract}



\begin{keyword}
Wind Turbine \sep Optimal Control \sep Nonsmooth Systems \sep Power Systems \sep Generalized Derivatives.
\end{keyword}

\end{frontmatter}


\section{Introduction}\label{section:I}\label{sec:introduction}

According to the U.S.\ Department of Energy, as can be seen in this report \cite{wiser2022land}, wind energy is recognized as a prominent contributor to electric-power capacity growth in the U.S. Consequently, considerable efforts have been dedicated in the past two decades to enhancing the performance of wind turbine power systems (WTPSs) and integrating them with traditional power systems and the power grid. However, given their distinct nature compared to  traditional power systems, integrating WTPSs with the existing power grid remains a significant challenge \cite{liang2016emerging} that is being  studied by national labs, industry and by academics
\cite{Pourbeik2014,GE2010,eisa2019modeling,eisa2019nonlinear}.
Challenges in WTPSs include inherent fluctuations, limited predictability, reduced grid stabilization inertia, and heightened sensitivity to uncontrolled factors such as weather conditions.
Consequently, research endeavors encompassing control systems, optimization, and power generation/storage have significantly surged over the past two decades \cite{eisa2019modeling,eisa2019nonlinear,kusiak2010power,lu2009combining,eisa2020investigating,menezes2018review,Tsourakis2009,gao2016pitch,lan2018fault,zhaocombined,kumar2016review,abdelfattah2023novel,hansen2024condition,golnary2019dynamic,kear2016computational}, where major efforts have been made to have a better understanding of the response of the WTPS to sudden changes of uncontrolled parameters, such as the wind speed, power grid loads and the bus frequency or voltage \cite{GE2010,eisa2019nonlinear,eisa2019modeling,Tsourakis2009,eisa2020investigating,Rahimi2014,Eisa1611:Sensitivity,ghosh2015doubly}. 

\subsection{Modeling and Control of WTPSs}\label{subsection:I.A}

Traditionally, there are two control inputs/strategies for WTPSs \cite{eisa2019modeling,menezes2018review}: (i) pitch angle control, and (ii) torque/tip-ratio control. 
Using the first strategy (i.e., pitch control), we can control the power produced from wind by changing the pitch angle \cite{eisa2019modeling,eisa2019nonlinear,GE2010}. In the second strategy, the torque/tip-ratio based control methods enable changing the rotor speed of the WTPS to maximize power production; a common strategy/method for torque control is known as maximum power point tracking (MPPT) where the objective is to maximize power production through changing the speed of the turbine 
\cite{menezes2018review,kumar2016review,6074308,abdullah2012review}.
In this paper, we focus mainly on pitch angle control.
Pitch control allows one to change the pitch angle in order to control the aerodynamic loads on the rotor. This affects the power coefficient, known as $C_p$, of the wind turbine, which in turn changes the amount of power produced from the WTPS \cite{eisa2019modeling,eisa2019nonlinear,GE2010}.
Active pitch control methods are used \cite{eisa2019modeling,eisa2019nonlinear,GE2010,menezes2018review} to control the pitch angle typically in Region 3, where the generator torque is usually kept constant and the pitch angle is varied to keep the generated electric power at the rated level.
Pitch control has been traditionally implemented and designed 
 using proportional-integral-derivative (PID) controllers \cite{eisa2019modeling,eisa2019nonlinear,Tsourakis2009,GE2010,menezes2018review}.
 There are some studies aimed at improving the efficiency of the PID controllers, such as using estimation and optimization techniques \cite{gao2016pitch} and  genetic algorithm techniques  \cite{civelek2016control}.
 Additionally, feedforward, Model predictive control (MPC), sliding mode and other controllers have been also considered (e.g., \cite{koerber2013combined,zhaocombined,chang2017fuzzy,golnary2019dynamic}).
It is worth noting that some works in literature have considered both pitch and torque/tip-ratio controls simultaneously such as \cite{soliman2011multiple,Jafarnejadsani2015}.

\textcolor{black}{In WTPSs control literature, many studies use linearized/reduced and/or models in the frequency/s-domain (e.g., \cite{slootweg2003general,hiskens2011dynamics,Pourbeik2014,GE2010,Lasheen2016,hu2022novel,li2020dynamic}) while testing and analyzing newly developed control methods. Recently, significant progress has been made in WTPS modeling \cite{eisa2019modeling,eisa2019nonlinear}. In said recent works, WTPSs have been rigorously studied and converted into a full scale nonlinear time-domain differential-algebraic-equation (DAE) model based on physical/industrial literature such as, but not limited to \cite{GE2010,Pourbeik2014,Tsourakis2009}.}
This nonlinear time domain WTPS DAE model \cite{eisa2019modeling,eisa2019nonlinear} will be used in the control study of this paper (in Section \ref{section:III}).
\subsection{
Development\ha{s} in Nonsmooth ODE/DAE Optimal Control}\label{subsection:IB}  

Traditional optimal control methods require differentiability of the system's dynamics as well as the objective functional. 
However, many applications, such as the one this paper is concerned about, i.e.,  WTPSs, exhibit nonsmoothness in the dynamics and the objective functional 
due to transitions between different Regions of operation (see Figure \ref{PmechWref} in Section \ref{section:III}); in fact, a detailed study on nonsmoothness appearing in the WTPS parameters can be found in \cite{eisa2021sensitivity}. Therefore, an extension of traditional optimal control methods is desired to overcome this issue (i.e., study nonsmooth problems).
Two popular methods in the literature used to resolve the nonsmoothness issue include using slack variables and smoothing approximations \ha{\cite{Kelly2017,skandari2016smoothing,caspari2020dynamic,bagirov2013hyperbolic,amhraoui2022smoothing}}. Both the slack variables and the smoothing approximation methods have limitations. \ha{Using the slack variables method 
increases the size of the problem which results in a slower convergence and higher complexity especially for large systems}, while using a smoothing approximation fundamentally changes the optimization problem with the accuracy of approximation depending on the choice of the smoothing parameters which add an element of arbitrariness; in fact, this has been observed directly with relevance to the WTPS \cite{eisa2021sensitivity}. 

Unlike slack variables or smoothing approaches, frameworks now exist that enable the direct treatment of nonsmoothness. For example, there are methods that extend standard indirect methods \ha{(in which calculus of variations is used to furnish a boundary value problem that can be solved
to obtain a trajectory that minimizes/maximizes a given objective functional \cite{pontryagin1987mathematical,Kelly2017})} for optimal control to nonsmooth problems, including constrained nonsmooth dynamical systems (see Ch.5 in \cite{Clarkebook} and the work in \cite{clarke2010optimal}). However, these results generally come with restrictive conditions, and so are practically implementable typically only in special cases, such as linear complementarity systems \cite{guo2016necessary}.
As for nonsmooth direct optimal control methods, lexicographic directional derivative (LD-derivative) \cite{Khan2015_automaticdiff} based nonsmooth optimal control methods for ODE systems and DAE systems were introduced recently in \cite{khan2014generalizedoptimal} and \cite{2016_Stechlinski_Barton_CDC}, respectively.
The advantage of such methods is that LD-derivatives satisfy sharp calculus rules, which makes them a practical tool for calculating computationally relevant  generalized gradient elements of the objective functional. 
LD-derivatives based nonsmooth optimal control methods will be presented in Section \ref{section:II} 
as we will use it in the optimal control study of this paper.
\ha{Moreover, nonsmooth  optimization methods \cite{bagirov2020numerical} can utilize  LD-derivatives based methods.}
\ha{Some studies have applied LD-derivative based methods to solve nonsmooth optimal control problems involving nonsmooth systems with constraints, including nonsmooth DAE systems in \cite{2016_Stechlinski_Barton_CDC}, mixed nonlinear complementary systems\cite{stechlinski2022dynamic}, and DAEs with optimality criteria \cite{stechlinski2020optimization}.}

\subsection{Motivation and Contributions}\label{subsection:IC}  
Despite the significant progress made in WTPS control strategies (as discussed in Section \ref{subsection:I.A}), current research efforts often suffer from three major issues. 
First, model linearization and reduction that can lead to inaccurate conclusions about the control method or its performance (see relevant discussions on this in \cite{abdelfattah2023novel,eisa2019modeling,IJDY17}). 
Second, it is quite common for WTPSs to experience nonsmooth (non-differentiable) profiles in their dynamics, parameters and/or the objective functional (see for example \cite{eisa2021sensitivity}); however, to the authors' best knowledge, no work has been done that directly addresses the nonsmoothness present in the WTPS model for control studies. 
Overcoming nonsmoothness by smoothing has been shown to be less effective in studying WTPSs as shown in \cite{eisa2021sensitivity}. 
Third, there is a lack of optimal control methods extending from the calculus of variations in the sense of \cite{pontryagin1987mathematical,Kelly2017,Rao2010} (such as direct methods)  that find the optimal feasible trajectory for use as a reference control solution/design tool, which can be used by different control methods for realization and/or comparison efforts. 
We are motivated to provide a new \textit{theory and framework} to address these issues. To overcome the first issue, the recent nonlinear time domain WTPS DAE model introduced in \cite{eisa2019modeling,eisa2019nonlinear} \ha{(see relevant discussion in subsection \ref{subsection:I.A})} will be used for \ha{the following reasons: (i) to the best of the authors' knowledge, it is the only WTPS model that has been shown to be mathematically well-posed (the existence and uniqueness properties have been proved in \cite{IJDY17}); (ii) this model is shown to be amenable to available, non-commercial, stiff numerical solvers of DAE systems \cite{eisa2019modeling}; (iii) it has been shown in \cite{eisa2018wind} that reductions based on linear/multiple-scale approximations can be made, however, the separation between fast and slow variables is quite complicated and is not easy to construct, suggesting against reduction/linearization; and (iv) it is accurate when validated against real-world measured data (see \cite{eisa2019modeling,eisa2019nonlinear,eisa2018wind}) 
and outperformed some of the most cited models that existed in the literature as asserted in \cite{eisa2019modeling,eisa2019nonlinear,eisa2018wind}.}
The second and third issues can be addressed using the developments in nonsmooth optimal control theory for dynamical systems with DAE dynamics \cite{2016_Stechlinski_Barton_CDC} (as discussed in Section \ref{section:II}). These developments with relevance to the work of this paper will be re-introduced in Section \ref{section:II}. A simpler case study for a reduced version of the mentioned WTPS model \cite{eisa2019modeling,eisa2019nonlinear} in a particular wind speed range and smooth setting has been conducted successfully by the authors in \cite{abdelfattah2023novel}, which further suggests that the work of this paper is with significant potential and is highly needed.

\textcolor{black}{In this paper, we make the following contributions:}
\begin{enumerate}
    \item \textcolor{black}{We unify and generalize the modeling and control framework for WTPSs which includes/covers all wind speed ranges: low, moderate and high (Regions 1-3, see Figure \ref{PmechWref}). This generalized framework is achieved by making use of nonsmooth functions which enable continuous transition between all different operational conditions (i.e., Regions 1-3) of the WTPS. Our generalized framework is based on the recently developed time-domain, nonlinear, validated against real-world data, DAE WTPS models in \cite{eisa2019modeling,eisa2019nonlinear}. We did not perform any linearization/reduction or smoothing approximation to have the generalized framework as accurate, and reflective to WTPS dynamics, as possible. 
    }
    \item  \textcolor{black}{We provide a novel optimal control method that is suitable for (and specialized to) this \textit{nonsmooth} WTPS generalized modeling and framework using tools from generalized derivatives theory. We thus provide, for the first time, a mathematically rigorous framework (extended from the calculus of variations) that aims to find the feasible optimal trajectory for the pitch angle controller that achieves the WTPS objectives in all Regions of wind conditions simultaneously (i.e., in Regions 1-3, see Figure \ref{PmechWref}): (i) maximizing the power generation (in lower wind speed conditions in Regions 1 and 2); and (ii) keeping the output power at the rated level (in high wind speed conditions in Region 3). 
    For that, we use the established results in \cite{2016_Stechlinski_Barton_CDC} to formulate a nonsmooth sequential optimal control (i.e., a direct method) approach for the WTPS.
 Furthermore, we test our approach and present the optimal control response of WTPSs when exposed to wind variations with different wind profiles (Gaussian and ramp profiles). We also apply our approach to, and present results of, the highly turbulent, real-world wind data that was used in modeling validation in \cite{eisa2019modeling,eisa2019nonlinear,eisa2018wind}.}
\item \textcolor{black}{We demonstrate the importance of this novel approach, which treats the nonsmoothness directly and is based on state-of-the-art accurate models of WTPSs via: (i) comparison with a smoothing approach/approximation (which is shown to introduce error); and (ii) comparison with commercial smooth optimal control solver, namely GPOPS-II \cite{gpops2}, where the nonsmoothness is ignored (which is shown to fail sometimes).}
\end{enumerate}

\textcolor{black}{Given all the contributions listed above, we believe that the optimal control response our generalized framework produces should provide a benchmark (reference for comparison) to assess the performance of any control strategy applied to WTPSs.} 
\textcolor{black}{\subsection{Organization of the Paper}}
\textcolor{black}{In Section \ref{section:II}, we provide a background and mostly self-contained material on solving nonsmooth optimal control via lexicographic differentiation. What is provided in Section \ref{section:II} is customized to the class of DAE systems this paper is concerned with. Then, in Section \ref{section:III}, we provide our first contribution (unification and generalization of WTPS DAE models using nonsmooth functions). Following that, in Section \ref{section:IV}, we provide our second contribution (novel nonsmooth optimal control approach). In Section \ref{section:V}, we provide our third contribution, as we compare the novel nonsmooth  approach derived and tested in Section \ref{section:IV} with a smoothing approximation and a smooth optimal control solver, to demonstrate the effectiveness of our new approach. We conclude the paper in Section \ref{section:VI}.}   
 
\section{{Solving Nonsmooth Optimal Control Problems via \ps{LD-Derivatives}}}\label{section:II}
\pgs{As discussed in \cite{eisa2019modeling,eisa2021sensitivity}, a WTPS can be formulated as a  DAE system in the semi-explicit form. Therefore, we turn to the work in \cite{2016_Stechlinski_Barton_CDC}, which provides the generalized gradient elements of the  objective functional for nonsmooth optimal control problems with (possibly nonsmooth) DAE systems embedded.} 
In particular, the DAE optimal control problem of interest is as follows: 
\begin{subequations}\label{NonSmooth_Optimal_eq:1}
 \begin{align}
       \min\limits_{\uu} \quad \phi(\uu)&={\phi}(t_f,\uu(t_f),\x(t_f),\y(t_f)),\label{NonSmooth_DAEobj}\\
     \text{s.t.}  \quad \dot{\x}(t) &= \h(t,\uu(t),\x(t),\y(t)), \quad \x(t_0)=\x_0,\label{NonSmooth_DAE1}\\
     \bm{0}_{n_y} &= \g(t,\x(t),\y(t)),\label{NonSmooth_DAE2} 
    \end{align}
\end{subequations}
with differential state variables $\x(t) \in D_x \subseteq \real^{n_x}$ (with initial condition $\x_0 \in D_x$),  algebraic state variables $\y(t) \in D_y \subseteq \real^{n_y}$, control $\uu(t) \in D_u \subseteq \real^{n_u}$, and finite time interval $[t_0,t_f]\subset D_t \subseteq \real$, and RHS functions $\h:D_t \times D_u \times D_x \times D_y \to \real^{n_x}$ and $\g:D_t \times D_x \times D_y \to \real^{n_y}$, where   $D_t, D_u, D_x,D_y$  are open and connected sets.
In this paper we use the sequential method (also known as direct single shooting) to solve the optimal control problem in \eqref{NonSmooth_Optimal_eq:1}; in the sequential method, the  continuous-time problem is converted into an nonlinear programming (NLP) by discretizing the control input to produce a finite set of decision variables --- the trajectory of the embedded dynamical system (in this case, the DAE system) is approximated via numerical simulation, and so the decision  variables in the NLP come from the parameterization of the control \cite{Kelly2017,Rao2010}. 
Namely, the optimal control problem in \eqref{NonSmooth_Optimal_eq:1} seeks to minimize the objective functional over some function space, i.e., $\min_{\uu \in \mathcal{U}} \phi(\uu)$  (where, e.g., $\mathcal{U}$ could be the space of Lebesgue integrable functions on some finite time horizon). Instead, we convert this problem into a finite-dimensional problem by parametrically discretizing the control into a finite truncation of basis functions, using the control parameters $\pp \in D_p \subseteq \real^{n_p}$, and then we instead solve $\min_{\pp \in D_p} \phi(\pp)$. In this paper, we focus on a piecewise constant control discretization on $n_s \in \posint$ subintervals $[\tau_0,\tau_1], \ldots, [\tau_{n_s-1},\tau_{n_s}]$:
\begin{align}\label{control_param_full} 
  \uu(t)
  &=\textbf{u}_{(i)} \quad \text{ if } t \in (\tau_{i-1},\tau_{i}],
\end{align} 
with pre-fixed times $\tau_i$ satisfying $t_0 = \tau_0< \tau_1 <\dots< \tau_{n_s-1}< \tau_{n_s}=t_f$, with $\tau_i-\tau_{i-1}=(t_f-t_0)/n_s$, and with control parameters $\pp=(\bm{u}_{(1)},\dots,\bm{u}_{(n_s)}) \in D_p=D_u^{n_s} \subseteq \real^{n_u n_s}$ (i.e., $\bm{u}_{(i)} \in D_u$ for each $i$, and $n_p=n_u n_s$ here). The sequential approach uses the gradient information $\nabla \phi(\pp)$ to update the control parameters in the spirit of $\pp_{k+1} = \pp_{k} -\gamma \nabla \phi(\pp_{k})$ for some step size $\gamma$ and repeats the process until a termination criteria is satisfied (so that $\nabla \phi(\pp_{k}) \approx \zero$). Moreover, the gradient information is obtained using the sensitivity functions $\frac{\partial \x}{\partial \pp}$ and $\frac{\partial \y}{\partial \pp}$ associated with the embedded DAE system via  \begin{equation}\label{eq.nabla}
\nabla \phi(\pp)=\frac{\partial \phi}{\partial \pp}+\frac{\partial \phi}{\partial \x}\frac{\partial \x}{\partial \pp}+\frac{\partial \phi}{\partial  \y }\frac{\partial \y}{\partial \pp}.
\end{equation}
However, motivated by WTPSs, we are interested in the case where the maps $\phi,\h,\g$ are assumed to be nonsmooth and thus not necessarily differentiable. Consequently, the solutions $\x$ and $\y$ of the DAE system may be nonsmooth with respect to $\uu$, invalidating \eqref{eq.nabla}. To overcome this issue, we turn our attention to generalized derivatives theory and lexicographic differentiation to compute (generalized) derivative information of $\phi$ for use in a nonsmooth sequential direct method to solve \eqref{NonSmooth_Optimal_eq:1}.

\subsection{Lexicographic Directional Derivatives}\label{subsection:IIA}
Let $X\subseteq \real^n$ be open and $\f:X\to\real^m$ be locally Lipschitz on $X$. Clarke's generalized derivative  \cite{Clarkebook} of $\f$ at $\xnot \in X$ is given by the convex hull of the Bouligand subdifferential  of $\f$, i.e.,
\begin{equation}\label{eq:Clarke}
\partial_{\rm C}\f(\xnot):=\conv \; \partial_{\rm B}\f(\xnot),
\end{equation}
 where the Bouligand subdifferential is defined as:
\begin{equation*}  
\partial_{\text{B}} \f(\x_0)
:=\left\{\F \in \real^{m \times n}: \exists \x_i \to \x_0 \text{ s.t. } \x_i \in X \setminus Z_{\f} \; \forall i \in \posint \text{ and } \J \f(\x_{i}) \to \F \right\},
\end{equation*}   
where $Z_{\f}$ is the zero measure subset on which $\f$ is not differentiable and $\J\f(\x_i)$ is the Jacobian matrix of $\f$ at $\x_i$. 
Note that in case $\f$ is $C^1$ at $\xnot$, then $\partial_{\rm B} \f(\xnot) = \partial_{\rm C} \f(\xnot)=\{\Jf(\xnot)\}$.
Although elements of Clarke's generalized derivative are useful for nonsmooth numerical methods, it does not satisfy sharp calculus rules (e.g. sum rule --- see \cite{ref:OMS_generalizedderivatives}), which makes it difficult to calculate said elements of Clarke's generalized derivative for complicated functions.
The lexicographic directional derivative (LD-derivative) \cite{Khan2015_automaticdiff}, which is based on lexicographic differentiation \cite{Nesterov2005}, was proposed to overcome this limitation (and other limitations \cite{ref:OMS_generalizedderivatives}), 
which is applicable to lexicographically smooth (L-smooth) functions \cite{Nesterov2005}.
The class of L-smooth functions are locally Lipschitz and directionally differentiable to arbitrary order, which includes all $C^1$ functions, $PC^1$ functions (hence abs-value, min, max), and convex functions (hence  any $p$-norm), as well as the compositions of these functions.  
Following from the definition of L-smooth functions, Nesterov \cite{Nesterov2005} defined the lexicographic derivative (L-derivative) 
for L-smooth functions which
can be seen as a Jacobian-like object, as discussed in
\cite{Nesterov2005, khan2014generalized, Khan2015_automaticdiff}, and is just as useful in nonsmooth numerical methods as Clarke generalized derivative elements.

\pgs{An LD-derivative, denoted $\f'(\xnot;\M)$, can be used to calculate an L-derivative of an L-smooth function $\f$ if the so-called ``directions matrix'' $\M=[\bm{m_1} \quad \bm{m_2} \quad \dots  \quad \bm{m_k}]\in \real^{n \times k}$ has full row rank. Roughly, the columns in  $\M$, which span the domain space if $\M$ has full row rank, act as probing directions. 
The main advantage of the LD-derivative formulation is that it satisfies sharp calculus rules and that  there are closed-form expressions available for nonsmooth elemental functions such as $\max$, $\min$, absolute-value, etc. (see \cite{Khan2015_automaticdiff,ref:OMS_generalizedderivatives}\ps{)}:
\begin{itemize}
\item smooth rule: $\f \in C^1 \Rightarrow\f'(\xnot;\M)=\J\f(\xnot)\M$, e.g.\ $\sin'(x_0;\M)=\cos(x_0)\M$.
\item  abs rule: $\mathrm{abs}'(x_0;\M)=\mathrm{fsign}(x_0,\M)\M$ where the first-sign function $\text{fsign}(\cdot)$ returns the sign of the first nonzero element, or zero if its input is zero.
\item  min rule: for $\M_1, \M_2 \in \real^{1 \times k}$,
\begin{align*}
\min{'}(x_0,y_0;(\M_1,\M_2))
&={\rm \bf slmin}([x_0 \quad \M_1],[y_0 \quad \M_2])\\
&=\begin{cases}
\M_1 &\text{if } \mathrm{fsign}(\begin{bsmallmatrix} x_0 \\ \M_1^{\rm T} \end{bsmallmatrix}-\begin{bsmallmatrix} y_0 \\ \M_2^{\rm T} \end{bsmallmatrix}) \leq 0,\\
\M_2 &\text{if } \mathrm{fsign}(\begin{bsmallmatrix} x_0 \\ \M_1^{\rm T} \end{bsmallmatrix}-\begin{bsmallmatrix} y_0 \\ \M_2^{\rm T} \end{bsmallmatrix}) > 0,
\end{cases}
\end{align*}
i.e., the shifted-lexicographic-minimum ${\bf slmin}$ returns the lexicographic minimum of the two vector arguments, left-shifted by one element. 
\item  componentwise rule: $\f'(\xnot;\M)=(f_1'(\xnot;\M),f_2'(\xnot;\M),\ldots,f_n'(\xnot;\M))$.
\item  chain rule: $[\g\circ \f]'(\xnot;\bm{\mathrm{M}})=\g'(\f(\xnot);\f'(\xnot;\bm{\mathrm{M}})).$
\item sum rule: $[\f+\g](\xnot;\bm{\mathrm{M}})=\f'(\xnot;\bm{\mathrm{M}})+\g'(\xnot;\bm{\mathrm{M}}).$
\item product rule: $[fg](\xnot;\bm{\mathrm{M}})=f'(\xnot;\bm{\mathrm{M}})g(\xnot) +f(\xnot) g'(\xnot;\bm{\mathrm{M}}).$
\end{itemize}
}

Because of these sharp calculus rules, the LD-derivative can be computed accurately, hence, making it a computationally-relevant tool that can be effectively used to furnish an L-derivative from solving a system of linear equations. Namely,
\begin{equation}\label{eq.LD.L.derivative}
\f'(\x_0;\M)=\J_{\rm L}\f(\x_0;\M) \M,
\end{equation}
where $\J_{\rm L}\f(\x_0;\M)$ is the L-derivative of $\f$ at $\x_0$ in the directions $\M$, a computationally relevant generalized derivative object.
For more details on LD-derivative theory and some of its applications, the reader may refer to \cite{Khan2015_automaticdiff,ref:OMS_generalizedderivatives}.


\subsection{Generalized Gradients  of Control Problems with Nonsmooth DAEs  Embedded}\label{subsection:IIA}

\pgs{Returning to the optimal control problem in \eqref{NonSmooth_Optimal_eq:1}, we recall some results from the literature that enable the use of LD-derivatives in nonsmooth optimal control settings: For nonsmooth optimal control with ODE systems, the framework presented in \cite{khan2014generalizedoptimal} provides the generalized gradient elements of the objective functional with nonsmoothness present in the objective functional and/or the system dynamics, where such description of generalized gradient elements of the objective functional is based on the sensitivity analysis results in \cite{khan2014generalized} concerning generalized derivatives of ODE solutions  with  respect  to  the  parameters of the system. The authors in \cite{2016_Stechlinski_Barton_LD_DAEs} established a similar sensitivity analysis theory for semi-explicit DAEs with ``generalized differentiation index one'', which can be exploited to furnish generalized gradient elements of the objective functional of an optimal control problem with an embedded DAE \cite{2016_Stechlinski_Barton_CDC}, in the same spirit as \cite{khan2014generalizedoptimal}. In particular, a generalized gradient element of the objective function in \eqref{NonSmooth_Optimal_eq:1} can be obtained via the following result, which specializes Theorem 3.2 in \cite{2016_Stechlinski_Barton_CDC}.}  
 
  
\begin{thm}\label{thm:cdc}
Suppose that the following conditions hold:
\begin{itemize}[(i)]
\item  Assume that  $\phi,\h,\g$ are  L-smooth functions. 
\item Assume that $\uu$ is parameterized according to \eqref{control_param_full}.
\item Assume that there exists a  solution $(\Tilde{\x},\Tilde{\y})$ of \eqref{NonSmooth_DAE1}-\eqref{NonSmooth_DAE2} on $\left[t_0,t_f\right]$ through the initial data $\{(t_0,\pp_0,\x_0,\y_0)\}$, for some  $\pp_0 \in D_p=D_u^{n_s}$, such that
\begin{equation}\label{eq.clarkeregularity}
\pi_{\y}\partial \g(t,\Tilde{\x}(t),\Tilde{\y}(t))=\{\textbf{G}_{\y}\in\real^{n_y \times n_y}: \exists [\textbf{G}_{t} \; \textbf{G}_{\x} \; \textbf{G}_{\y}] \in \partial \g(t,\Tilde{\x}(t),\Tilde{\y}(t))\}
\end{equation}
contains no singular matrices for any $t\in[t_0,t_f]$.  
\end{itemize}
\pgs{ 
Then a generalized gradient of $\phi$ in \eqref{NonSmooth_Optimal_eq:1} can be calculated according to 
\begin{equation}\label{eq.OCP.gengradient}
\bm{\mu}=\phi'(t_f,\bm{u}_{(n_s)},\Tilde{\x}(t_f),\Tilde{\y}(t_f);\mathbf{0}_{1 \times {n_u n_s}},\coord_{(n_s)}^{\rm T} \otimes \II_{n_u},\Tilde{\textup{\textbf{X}}}(t_f),\Tilde{\textup{\textbf{Y}}}(t_f))^{\rm T} \in \partial \phi(\pp_0),
\end{equation} 
where  the LD-derivative sensitivities $(\Tilde{\textbf{X}},\Tilde{\textbf{Y}})$ solve  the following for $i=1,\ldots,n_s$:
    \begin{align}
         \dot{\textbf{X}}(t) &= \h'(t,\bm{u}_{(i)},\Tilde{\x}(t),\Tilde{\y}(t);(\zero_{1 \times n_s n_u},\coord_{(i)}^{\rm T} \otimes \II_{n_u},\textbf{X}(t),\textbf{Y}(t))), \quad \forall t\in(\tau_{i-1},\tau_{i}], \notag\\
         \bm{0}_{n_y\times n_s n_u} &= \g'(t,\Tilde{\x}(t),\Tilde{\y}(t);(\zero_{1 \times n_s n_u},\textbf{X}(t),\textbf{Y}(t))), \quad t\in[t_0,t_f], \notag\\
        {\textbf{X}}(t_0) &= \zero_{n_x \times n_s n_u}, \label{NonSmooth_Optimal_eq:4}\\
        \zero_{n_y \times n_s n_u}&=\g'(t_0,\x_0,\y_0;(\zero_{1 \times n_s n_u},\zero_{n_x \times n_s n_u},\capy(t_0))), \notag
    \end{align}
where $\otimes$ is the Kronecker product and $\coord_{(i)}\in \real^{n_s}$ denotes the $i^{\rm th}$ standard basis vector.}
\end{thm}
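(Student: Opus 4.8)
The plan is to recognize the statement as a specialization of Theorem~3.2 of \cite{2016_Stechlinski_Barton_CDC} to the piecewise-constant control discretization \eqref{control_param_full}; the bulk of the work is therefore to cast the parameterized problem into the form that general result requires and to identify the correct LD-derivative directions, rather than to redo the underlying sensitivity analysis. First I would exploit the parameterization to view \eqref{NonSmooth_Optimal_eq:1} as a finite-dimensional problem in $\pp=(\bm{u}_{(1)},\dots,\bm{u}_{(n_s)})$. On each subinterval $(\tau_{i-1},\tau_i]$ the control satisfies $\uu(t)=\bm{u}_{(i)}=(\coord_{(i)}^{\rm T}\otimes\II_{n_u})\pp$, so the LD-derivative of the \emph{active} control with respect to $\pp$ is the constant matrix $\coord_{(i)}^{\rm T}\otimes\II_{n_u}$; this is precisely the source of the Kronecker-product directions appearing in \eqref{NonSmooth_Optimal_eq:4}. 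Because $t_f$ is fixed, the time argument contributes the zero direction $\zero_{1\times n_s n_u}$, and since $\uu(t_f)=\bm{u}_{(n_s)}$ the terminal control argument contributes $\coord_{(n_s)}^{\rm T}\otimes\II_{n_u}$.

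Next I would invoke the nonsmooth sensitivity theory for semi-explicit DAEs of generalized differentiation index one from \cite{2016_Stechlinski_Barton_LD_DAEs}. The hypothesis \eqref{eq.clarkeregularity} is exactly the index-one condition: it guarantees, via a lexicographic implicit-function argument, both that the DAE solution $(\Tilde{\x},\Tilde{\y})$ depends L-smoothly on $\pp$ and that the linearized LD-derivative sensitivity system \eqref{NonSmooth_Optimal_eq:4} is well posed. In particular, nonsingularity of every element of $\pi_{\y}\partial\g$ is the nonsmooth analogue of invertibility of $\partial\g/\partial\y$, and is what lets the algebraic sensitivity $\Tilde{\textbf{Y}}$ be solved for uniquely at each $t$; the consistent-initialization relation involving $\capy(t_0)$ fixes the initial algebraic sensitivity. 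This produces $(\Tilde{\textbf{X}},\Tilde{\textbf{Y}})$ as the LD-derivatives of $(\x,\y)$ with respect to $\pp$ evaluated along the full set of parameter directions.

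Finally I would apply the LD-derivative chain and componentwise rules to $\phi(\pp)=\phi(t_f,\uu(t_f),\x(t_f;\pp),\y(t_f;\pp))$ using the directions spanning the entire parameter space. The chain rule factors $\phi'$ through the sensitivities of its four arguments, producing exactly the argument list on the right-hand side of \eqref{eq.OCP.gengradient}: the zero time direction, the terminal control direction $\coord_{(n_s)}^{\rm T}\otimes\II_{n_u}$, and the state sensitivities $\Tilde{\textbf{X}}(t_f),\Tilde{\textbf{Y}}(t_f)$. Since $\phi$ is scalar, $\phi'(\cdot;\cdot)$ is a row object; by \eqref{eq.LD.L.derivative} it furnishes an L-derivative, and its transpose is a valid Clarke generalized gradient element, giving $\bm{\mu}\in\partial\phi(\pp_0)$.

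The main obstacle is the second step: justifying that the nonsmooth sensitivities propagate correctly across the whole interval and that the algebraic constraint can be consistently LD-differentiated and solved pointwise under only the nonsingularity hypothesis. This is the genuinely nonsmooth content, and it rests entirely on the index-one sensitivity results of \cite{2016_Stechlinski_Barton_LD_DAEs,2016_Stechlinski_Barton_CDC}; everything else reduces to bookkeeping of the directions matrices and routine application of the sharp LD-calculus rules.
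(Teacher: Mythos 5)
Your proposal is correct and follows essentially the same route as the paper: both recognize the result as a direct specialization of Theorem~3.2 in \cite{2016_Stechlinski_Barton_CDC}, verify its hypotheses (L-smoothness, the indicator-function control parameterization, and regularity of the solution via the nonsingularity of $\pi_{\y}\partial\g$), and identify the directions matrix $\M=\II_{n_s n_u}$ as the source of the Kronecker-product structure $\coord_{(i)}^{\rm T}\otimes\II_{n_u}$. The paper's proof is slightly more explicit in checking the individual items of Assumption~3.1 of the cited work (and in noting uniqueness of the solution), but the substance is the same.
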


\begin{proof}
The DAE system in \eqref{NonSmooth_DAE1}-\eqref{NonSmooth_DAE2}, with controls $\uu$ parameterized according to \eqref{control_param_full}, 
is in the form of the embedded DAE system in Eq.\ (6) in \cite{2016_Stechlinski_Barton_CDC}, with $\f$ replaced by $\h$, $\f_0(\pp)=\x_0$ a constant, the control $\vv$ absent here, and extra problem parameters absent here. Hence, the L-smoothness of $\phi, \h, \g$ here  guarantee Assumption (i)-(ii) and (v) of Assumption 3.1 in \cite{2016_Stechlinski_Barton_CDC} hold. Moreover, we can rewrite $\uu$ in
\eqref{control_param_full} as
$$\uu(t)=\uu(t;\pp)=\sum_{i=1}^{n_s} \uu_{(i)} \mathcal{X}_{(\tau_{i-1},\tau_i]}(t),$$
i.e., $\Psi_i(t)=\mathcal{X}_{(\tau_{i-1},\tau_i]}(t)$ is the indicator function,  so that Assumption (iii) of Assumption 3.1 in \cite{2016_Stechlinski_Barton_CDC} holds. (Assumption (iv) holds vacuously.) 
The assumptions on the RHS functions $\h, \g$ and the piecewise constant control $\uu$ imply that the solution $(\Tilde{\x},\Tilde{\y})$ on $[t_0,t_f]$ is unique, and the assumption that 
\eqref{eq.clarkeregularity} contains no singular matrices guarantees that said solution is regular in the terminology of  \cite{2016_Stechlinski_Barton_CDC}. 
Hence, Theorem 3.2 in \cite{2016_Stechlinski_Barton_CDC} is applicable, and the conclusions in this theorem follow immediately from it, and Example 3.4 in \cite{2016_Stechlinski_Barton_CDC}, with directions matrix $\M=\II_{n_s n_u}$ since in that case 
$$\pmb{\Psi}(t)=[\psi_{(1)}(t) \quad \cdots \quad \psi_{(n_s)}] \otimes \II_{n_u}=\coord_{(i)}^{\rm T} \otimes \II_{n_u} \quad \forall t\in(\tau_{i-1},\tau_i].$$ 
\end{proof}
 
\begin{remark}
The assumptions in Theorem \ref{thm:cdc}, including existence and regularity of a solution of \eqref{NonSmooth_DAE1}-\eqref{NonSmooth_DAE2} on $[t_0,t_f]$, \ps{are} in line with direct optimal control methods for smooth systems, except of course for the allowance of $\phi,\h,\g$ to be nonsmooth. In fact, the function $\h$ is permitted to be discontinuous with respect to $t$ in the theory in \cite{2016_Stechlinski_Barton_CDC}, and both $\h$ and $\g$ may depend on extra problem parameters, although we omit these generalizations (and use $\M=\II_{n_s n_u}$) to make the exposition clearer.
Note also that if $\phi, \h, \g$ are $C^1$,  $\Tilde{\textbf{X}}(t)=\frac{\partial \Tilde{\x}}{\partial \pp}(t;\pp_0), \Tilde{\textbf{Y}}(t)=\frac{\partial \Tilde{\y}}{\partial \pp}(t;\pp_0)$, $\mumu=\nabla \phi(\pp_0)$ in Theorem \ref{thm:cdc} (hence classical results are recovered). Lastly, we note that the objective functional ${\phi}$ in \eqref{NonSmooth_DAEobj} above is in Mayer form, but  Bolza/Lagrange forms of the objective functional are also permitted with the theory in Theorem \ref{thm:cdc}  since Bolza/Lagrange can be transformed into Mayer form (see, e.g.,  \cite{kirk2004optimal}), as we demonstrate in the next example.
\end{remark}

\normalsize

\begin{figure}[ht!]
\centering
\begin{tikzpicture}[node distance=1.5cm]

\node (in1) [io] {Input: Initial states and control input};
\node (pro2) [process, right of=in1,xshift=2.5cm] {Solve DAE system and sensitivity system  using $\pp$};
\node (dec1) [decision, right of=pro2, xshift=3.5cm] {$\phi(\pp)$ minimized?};

\node (pro2a) [process, below of=dec1, yshift=-2.5cm] {Solve DAE using $\pp$ to calculate states};

\node (pro2b) [process, above of=dec1, yshift=2.2cm] {Update $\pp$ using a generalized gradient element of $\phi(\pp)$};
\node (out1) [io, left of=pro2a,xshift=-3.5cm] {Return $\pp$ 
 and the states};

\draw [arrow] (in1) -- (pro2);
\draw [arrow] (pro2) -- (dec1);
\draw [arrow] (dec1) -- node[anchor=south] {yes} (pro2a);
\draw [arrow] (dec1) -- node[anchor=north] {no} (pro2b);
\draw [arrow] (pro2b) -| (pro2);
\draw [arrow] (pro2a) -- (out1);

\end{tikzpicture}
\caption{\ha{Simple flow chart clarifying the sequential optimal control process.}}\label{Fig:flowchart}
\end{figure}
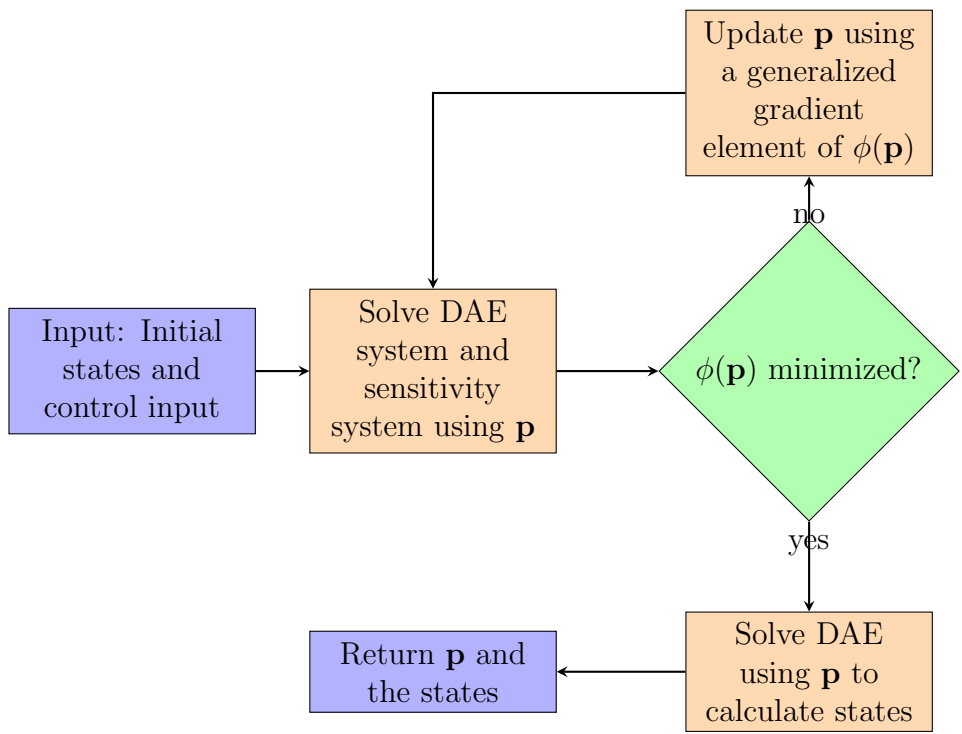
\ha{For an intuitive illustration of the sequential optimal control process, the reader can refer to the simple flow chart in Figure \ref{Fig:flowchart}.}
\pgs{\ha{We also} give an illustration of how the sequential LD-derivatives based optimal control method can be applied through the following example from \cite{Kelly2017} (in this case, using the ODE theory in \cite{khan2014generalizedoptimal} for comparative purposes).}

\begin{example}\label{ex:SiamBlock} 
 \pgs{ Consider a block that moves between the two points $x_1=0 \text{ and }x_1=1$, where $x_1$ is the position of the block,  within the time interval $[0,1]$, with the block's velocity, $x_2$, being  zero at both the initial time, $t_0=0$, and the final time, $t_f=1$. The control input $u$ to the system is the force applied to the left side of the block and no friction is assumed. We consider a nonsmooth objective functional as used  in \cite{Kelly2017} that minimizes the absolute work done. 
The optimal control problem can be then described as:}
\pgs{\begin{subequations}\label{SIAM_Example_Eq:1}\begin{align}
    \min\limits_u \; &\phi(u)=\int_{0}^{1}{|u(\tau)x_2(\tau)|d\tau}\label{siamobj_1},\\
    \text{s.t. }&\dot{x}_1 = x_2=:h_1,\qquad  x_1(0) = 0, \qquad x_1(1) = 1,\label{siamdynamics_1}\\
    &\dot{x}_2 = u=:h_2, \qquad  x_2(0) = 0, \qquad x_2(1) = 0\label{siamdynamics_2},
\end{align}
\end{subequations}}
\pgs{In using the sequential method, the continuous-time problem is transformed into an NLP by discretizing the control according to Eq.\ \eqref{control_param_full} and having the system dynamics in Eqs.\ \eqref{siamdynamics_1}-\eqref{siamdynamics_2}  converted to a set of constraints applied to the states and control at  discrete time points. A numerical ODE solver is used to compute the values of the states at these time points and, in the final step, the NLP solver is provided with an initial guess in the form of  reference control and states.}

\pgs{In order to apply the framework presented in \cite{khan2014generalizedoptimal}, we need the objective functional \eqref{siamobj_1} to be in the Mayer form. This can be done by introducing a new state variable $x_3(t)$, and equating the time derivative of that new variable to the integrand of the objective functional, i.e.
\vspace{-0.5em}
\begin{align}\label{eq:SiamWorkObjective}
    \dot{x}_3(t)=|u(t)x_2(t)|=:h_3,
\end{align}
with $x_3(0)=0$. So $n_x=3$ and $n_u=1$ and now the objective functional becomes 
\begin{align}\label{eq:SiamWorkObjectivefunctional}
  \min\limits_u\text{ } \phi(u)=\min\limits_u\text{ }x_3(t_f),
  \end{align}
which is in Mayer form. Parameterizing $u$ as a piecewise constant according to \eqref{control_param_full},  i.e., $u(t)=u_{(i)}$ on $(\tau_{i-1},\tau_i]$, $i=1,\ldots,n_s$, with $n_s=100$,  
the LD-derivative sensitivity function $\Tilde{\textbf{X}}(t)=(\Tilde{\textbf{X}}_1(t), \Tilde{\textbf{X}}_2(t), \Tilde{\textbf{X}}_3(t)) \in \real^{n_x \times n_s n_u}=\real^{3 \times 100}$ solves
\begin{align*}
    \dot{\textbf{X}}_1(t)&=\textbf{X}_2(t), \quad \forall t\in[0,1],\\
    \dot{\textbf{X}}_2(t)&=\coord_{(i)}^{\rm T}, \quad  \forall t\in(\tau_{i-1},\tau_{i}],\\   
    \dot{\textbf{X}}_3(t)
    &=\mathrm{fsign}\bigl({u}_{(i)} \Tilde{\textbf{X}}_2(t),\Tilde{x}_2(t) \coord_{(i)}^{\rm T}+{u}_{(i)} {\textbf{X}}_2(t)\bigl)\times \bigl( \Tilde{x}_2(t) \coord_{(i)}^{\rm T}+{u}_{(i)} {\textbf{X}}_2(t) \bigl),\;  \forall t\in(\tau_{i-1},\tau_{i}],\\   
    {\textbf{X}}_1(0)&=\zero_{1 \times n_s}, \quad {\textbf{X}}_2(0)=\zero_{1 \times n_s}, \quad {\textbf{X}}_3(0)=\zero_{1 \times n_s},
\end{align*} 
where  the LD-derivative rule
$$[\text{abs} \circ f]'(z;\M)=\text{abs}'(f(z);f'(z;\M))=\text{fsign}(f(z),f'(z;\M)) \times f'(z;\M)$$
has been used  with $z=(u,x_2)$ and $f(z)=u x_2$ here. Then, according to Theorem \ref{thm:cdc} (with algebraic equations absent, i.e., using \cite[Theorem 6 and Corollary 7]{khan2014generalizedoptimal}) for some reference control parameters $\pp_0=(u_{(1)},\ldots,u_{(n_s)})$,
\begin{align}
\begin{split}
    \etaeta=\Tilde{{\textbf{X}}}_3(t_f) \in \partial \phi(\pp_0).
\end{split}
\end{align}
} 
In Figure \ref{SIAM_BlockExample_all}, the results of the LD-derivatives based sequential method (above) is compared to/validated against the slack variables approach used in \cite{Kelly2017} for this same problem, and a smoothing approach using the following approximation of the absolute-value function \textcolor{black}{(as provided in \cite{Kelly2017})}: 
\begin{align} \label{smooth approximation_eq}
    |x| \approx x\tanh\left(\frac{x}{\alpha}\right),
\end{align}
for small $\alpha>0$.

\begin{figure}[h!]
    \centering
    {{\includegraphics[width=1\columnwidth]{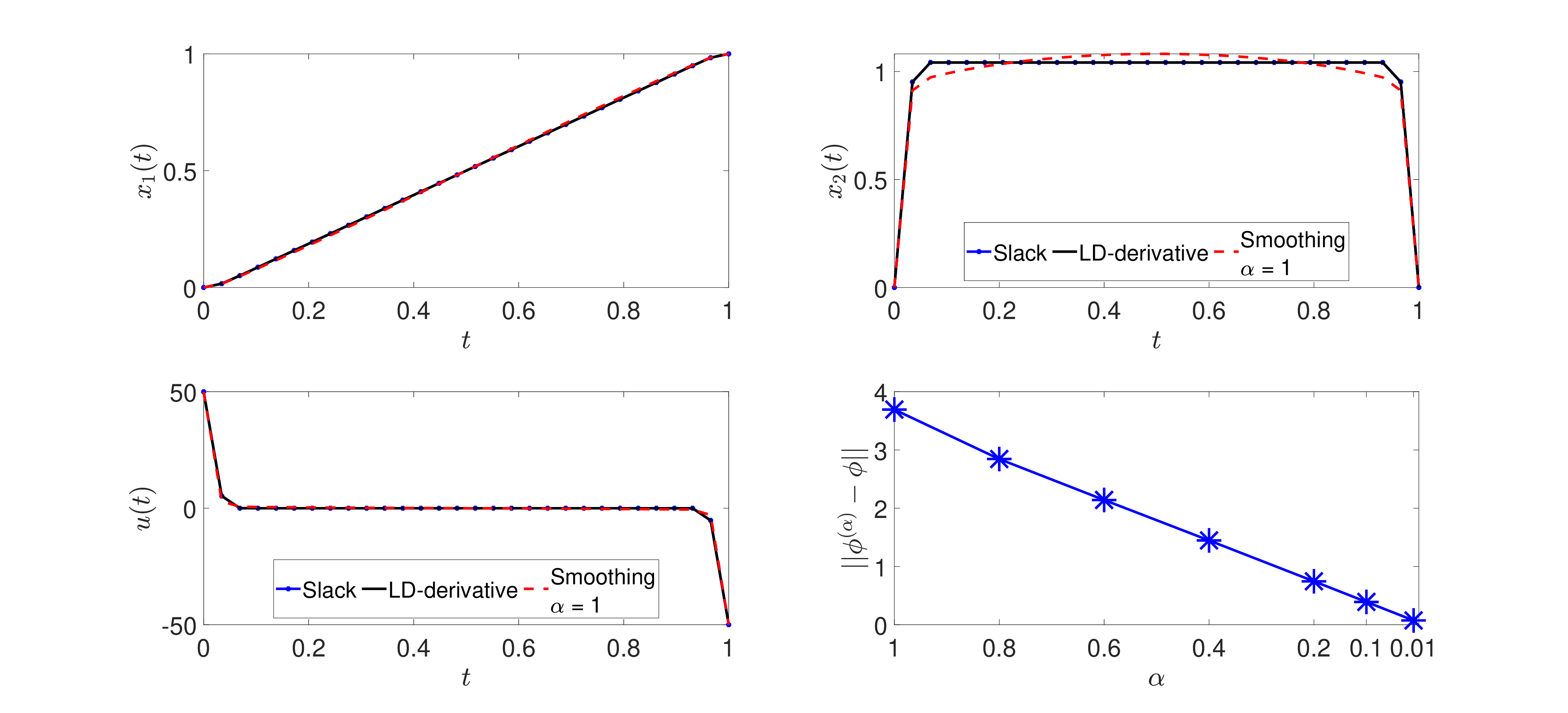}}}%
    \caption{\ha{This figure shows the results of Example \ref{ex:SiamBlock} using the objective functional in \eqref{eq:SiamWorkObjectivefunctional} (with piecewise constant control on $n_s=100$ subintervals of $[0,1]$)  via slack variables, smoothing approximation    
     and  LD-derivatives based sequential method. In the left panel, we see the optimal trajectory for the position $x_1(t)$ (upper panel) and the velocity $x_2(t)$ (lower panel). In the right panel, we show the control input signal $u(t)$ (upper panel) similar to the states in the left panel. In addition, we compare the 2-norm difference between the smoothed objective functional, $\phi^{(\alpha)}$, evaluated at different values of the smoothing parameter $\alpha$, and the nonsmooth objective functional $\phi$.}} %
     \label{SIAM_BlockExample_all}%
\end{figure}


It can be seen from the results \ha{in the upper right panel} of Figure \ref{SIAM_BlockExample_all} that the LD-derivatives based method produces an accurate solution when compared with the solutions from the slack variables and the smoothing approximation method (with a proper choice of the smoothing parameter $\alpha$), while avoiding the limitations of both of these methods (computational complexity of the slack variables and arbitrariness of smoothing approximation). We also used different values of the smoothing parameter $\alpha$ to show the dependency of this method's accuracy on the smoothing parameter, e.g., choosing $\alpha = 1$ provides a much more accurate solution than choosing $\alpha = 5$. \ha{In the bottom of the right panel of Figure \ref{SIAM_BlockExample_all}, we see the 2-norm of difference between the objective functional, $\phi^{(\alpha)}$, evaluated at different values of the smoothing parameter $\alpha$, and the nonsmooth objective functional $\phi$. As can be seen, a decrease in the value of $\alpha$ results in a better accuracy for the smoothing approximation. 
However, it remains unclear how to choose $\alpha$ to achieve a desired error, and it remains unclear in general whether such a smoothing approximation converges to the ``true'' solution. This, in part, motivates us to treat the nonsmooth problem directly (i.e., an LD-derivatives based approach).}
\end{example}

\section{\ha{Unifying and Generalizing WTPS Modeling and Control in Regions 1-3 using Nonsmooth Functions
}}\label{section:III} 
 
\pgs{Nonsmoothness in the WTPS  arises in three ways: (i) nonsmoothness in the parameters can be present due to extreme variations or situations, which can be modeled as nonsmooth profiles \cite{eisa2021sensitivity}; (ii) nonsmoothness in the control input;  and (iii) nonsmoothness from unifying and generalizing the WTPS modeling framework. In this part, we provide a unified and generalized WTPS control problem and modeling framework that is operable for all Regions 1-3 (as illustrated in Figure \ref{PmechWref}), by introducing nonsmooth RHS functions and objective functional to the model. Afterward, \textcolor{black}{in Section \ref{section:IV},}  we treat the introduced nonsmoothness directly using the  LD-derivatives based sequential optimal control method  outlined in Section \ref{section:II}.}   
\subsection{\ha{WTPS DAE Framework}}
First we provide the WTPS DAE model \textcolor{black}{ we use in this study (from \cite{eisa2019modeling,eisa2019nonlinear}) for the reasons (i)-(iv) we stated in subsection \ref{subsection:IC}. The reader is directed to the book chapter \cite{eisa2019nonlinear} for very detailed steps on the model derivations based on accurate physical/industrial considerations. Additionally, in said book chapter \cite{eisa2019nonlinear} it is mentioned how different control conditions and design considerations can amend the model for the user's need.} 
For example, one can take different settings for the reactive power control design, among many other options. However, with the generalization and unification in mind, and the steps provided in this section, one should be able to build on the results here to construct other particular frameworks customized to their problem of interest. For \textcolor{black}{our}  control problem, we consider the control input $\uu$ to be the pitch angle $\theta$, i.e.\ $u = \theta$. \textcolor{black}{Additionally}, we consider the two mass \textcolor{black}{model representation of electric and mechanical torques, \eqref{model1}-\eqref{model3}, as this is known, physically, to provide more accurate representation \cite{eisa2019modeling,eisa2019nonlinear} of the conversion of mechanical torque (due to turbine blades rotation via wind) into electric torque (provided to the generator). Moreover, we consider the power control \eqref{model4}-\eqref{model5} to operate without optional/extra \cite{eisa2019modeling,eisa2019nonlinear} inertia controls, frequency-bus control, or power storage. Also, the reactive power control is represented by the power factor mode \eqref{model6}-\eqref{model7} \cite{eisa2019modeling,eisa2019nonlinear}. Furthermore, the electric variables representing the active and reactive voltage/current delivered to the grid are represented by \eqref{model8}-\eqref{model10}\cite{eisa2019modeling,eisa2019nonlinear}. Lastly, the terminal voltage (at the connection between the wind turbine and the grid) is represented by the infinite-bus model \cite{eisa2019modeling,eisa2019nonlinear}, commonly used in literature for general testing of power sources interaction with the grid; this can be written algebraically based on Kirchhoff's laws as in \eqref{model11}}. Hence, the WTPS DAE system is given as follows:
\begin{subequations}\label{WTPS_model}
	\begin{align}
	&\dot{w}_g= \frac{1}{2H_g}\left[- \frac{P_{elec}}{w_g+w_0}-D_{tg}(w_g-w_t)-K_{tg} \Delta\theta_m\right] =: h_1,\label{model1}\\
	&\dot{w}_t=\frac{1}{2H}\left[\frac{P_{mech}}{w_t+w_0}+D_{tg}(w_g-w_t)
	+K_{tg} \Delta\theta_m\right] =: h_2,\label{model2}\\
	&\Delta\dot{\theta}_m = w_{base}(w_g-w_t)=: h_3,\label{model3}\\
	&\dot{f}_1 =w_g+w_0-w_{ref}=: h_4,\label{model4}\\
	&\dot{P}_{inp} =  \frac{1}{T_{pc}}\big[(w_g+w_0)(K_{ptrq}(w_g+w_0-w_{ref}) 
	+ K_{itrq}f_1)-P_{inp}\big]=: h_5,\label{model5}\\ 
	&\dot{P}_{1elec} = [P_{elec}-P_{1elec}]/T_{pwr}=: h_6,\label{model6}\\
	&\dot{V}_{ref}= K_{Qi}[Q_{cmd}-Q_{gen}]=: h_7,\label{model7}\\
	&\dot{E}_{qcmd} = K_{vi}[V_{ref}-V ]=: h_{8},\label{model8}\\
	&\dot{E}_{q} =  [E_{qcmd}-E_q]/0.02=: h_{9},\label{model9}\\
	&\dot{I}_{plv} = \left[ P_{inp}/V-I_{plv}\right]/0.02=: h_{10},\label{model10}\\
	&0=\left(V^2\right)^2-\left[2(P_{elec}R+Q_{gen}X)+E^2\right]V^2
	+\left(R^2+X^2\right)\left(P_{elec}^2+Q_{gen}^2\right)=: g,\label{model11}
	\end{align}
\end{subequations}
\normalsize
where $Q_{gen}= V(E_q-V)/X_{eq}$, $Q_{cmd}=\tan(PFE)P_{1elec}$, PFE is small angle, $w_{ref}$ is given by \eqref{w_ref},  $P_{elec}=I_{plv}V$, 
\begin{align}
\begin{split} 
    P_{mech}&=\frac{1}{2}C_p(\lambda,\theta)\rho A_r v_{wind}^3
    =\frac{1}{2} (\sum_{i=0}^{4} \sum_{j=0}^{4} \alpha_{i,j} \theta^i\lambda^j) \rho A_r v_{wind}^3,
    \end{split}
\end{align}
\normalsize
where $v_{wind}$ is the wind speed 
and $\lambda = \frac{(w_t+w_0)}{v_{wind}}$ is the tip ratio. 
The system   in \eqref{WTPS_model} has differential state variables 
$\x=(w_g,w_t,\Delta\theta_m,f_1,P_{inp},P_{1elec},V_{ref},E_{qcmd},E_q,I_{plv})$
(i.e., $n_x=10$)
and   algebraic state variable $y=V$
(i.e., $n_y=1$) representing the terminal voltage, where $w_g,w_t,\Delta\theta_m,f_1,P_{inp},P_{1elec}$, $V_{ref},E_{qcmd},E_q$, and $I_{plv}$ are the generator speed, the dynamic turbine speed, the integral of difference between $w_t$ and $w_g$, the integral of differences of speeds, the power order, the filtered electrical power, the reference voltage, the reactive voltage command, and the generator reactive variable, respectively.  
Denotation of all of the state variables and the parameters, as well as the values of the parameters are discussed and taken from \cite{eisa2019modeling,eisa2019nonlinear}, see  \ref{appendix}. 
\subsection{\ha{Unifying Regions: Nonsmooth Wind Profiles and Shaft Speed}}
We now introduce the wind profiles, representing wind speed variations, that we use to test/analyze the optimal control response in studying WTPSs. For that, we first consider a nonsmooth ramp wind profile which represents changes in the wind speed level (low slope for slow transitions and high slope, near discontinuity, for extreme and fast changes). This nonsmooth generalized ramp wind speed profile was also introduced in \cite{eisa2021sensitivity} as:
\begin{align}\label{v_wind,ramp}
    v_{wind,1}(t) = v_0+m(\max(0,t-t_{on})-\max(0,t-t_{off})),
\end{align}
\normalsize
where $v_0,m,t_{on},t_{off}$ are the initial wind speed before the start of the ramp, the slope of the ramp, the time at which the ramp starts, and the time at which the ramp ends, respectively. For wind gusts or disturbances, Gaussian wind speed profiles can be used (with large variance for calm gusts and very small variance, such that the wind profile will be close to a delta function, for extreme gusts), and can be expressed as:
\begin{align}\label{v_wind,Gaussian}
    v_{wind,2}(t)=v_0+\frac{1}{\sigma \sqrt{2\pi}}e^{\frac{-1}{2}(\frac{t-\mu}{\sigma})^2},
\end{align}
\normalsize
where $v_0$ is the initial wind speed, $\mu$ and $\sigma$ are the mean value and the standard deviation of the normal distribution, respectively.

Next, we note that having the system \eqref{NonSmooth_DAE1}-\eqref{NonSmooth_DAE2} applicable to all Regions of operations \ha{(with Regions 1-2 representing low and moderate wind speeds that are below the rated wind speed, and Region 3 representing high wind speeds above the rated wind speed \cite{njiri2016state}, as illustrated in Figure \ref{PmechWref})} will require some unification and generalization of conditions relevant to Regions 1-3. This unification and generalization necessarily introduces nonsmoothness in the dynamics and the objective functional of the control problem.
 \begin{figure}[!ht]\centering
\centerline{\includegraphics[width=0.8\columnwidth]{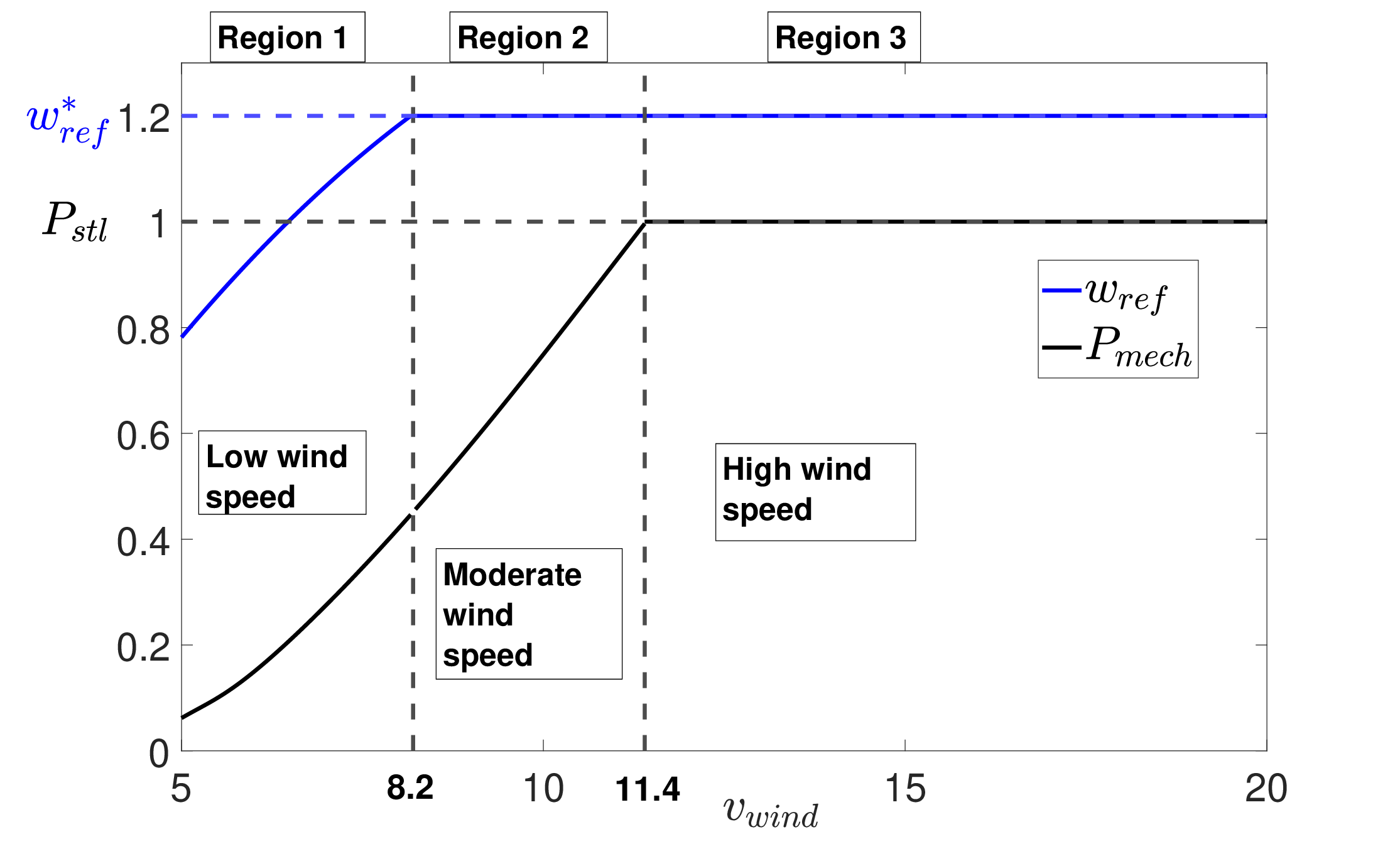}}
\setlength{\belowcaptionskip}{0.1pt}
\caption{How $w_{ref}$ and $P_{mech}$ change as $v_{wind}$ changes. There are three Regions of operation corresponding to different wind speed ranges. $w_{ref}$ transitions through nonsmoothness between Regions 1 and 2 at $v_{wind}=8.2$ \text{ m/s} \cite{eisa2019modeling,eisa2019nonlinear} and $P_{mech}$ transitions through nonsmoothness between Regions 2 and 3 at about $v_{wind}=11.4$ \text{ m/s} \cite{eisa2019modeling,eisa2019nonlinear}, respectively.}\label{PmechWref}
\end{figure}
We start with the reference shaft speed, $w_{ref}$, which controls the rotor. It can be modeled as a quadratic polynomial function of the generated electric power, $w_{ref}=-0.75P^2_{elec}+1.59P_{elec}+0.63$, in Region 1 of operations (low wind speeds) due to physical considerations \cite{eisa2019nonlinear,eisa2019modeling}. However, once the wind speeds are high enough (Regions 2-3), $w_{ref}$ should be kept at its rated value (constant) $w_{ref}=w^*_{ref}$, e.g. $w^*_{ref}=1.2$ for $v_{wind}\ge 8.2 \text{ m/s}$ \cite{eisa2019nonlinear,eisa2019modeling}. Thus, unifying $w_{ref}$ in Regions 1-3 can be done by introducing the nonsmooth min function:
\begin{align} \label{w_ref}
    w_{ref} = \min(-0.75P^2_{elec}+1.59P_{elec}+0.63,w^*_{ref}).
\end{align}
We can see from Eq.\ \eqref{w_ref} that for lower sind speeds ($v_{wind}<8.2 \text{ m/s}$), the quadratic expression in the left argument of the min function is less than $w^*_{ref}$, i.e., $-0.75P^2_{elec}+1.59P_{elec}+0.63<w^*_{ref}$, so the output of the min function will be $w_{ref}=-0.75P^2_{elec}+1.59P_{elec}+0.63$. At $v_{wind}=8.2 \text{ m/s}$, we have the quadratic expression exactly equal to $w^*_{ref}$, i.e., $-0.75P^2_{elec}+1.59P_{elec}+0.63=w^*_{ref}$, hence, we have $w_{ref}=-0.75P^2_{elec}+1.59P_{elec}+0.63=w^*_{ref}$. Finally, at $v_{wind}>8.2 \text{ m/s}$, the quadratic expression gives a value higher than $w^*_{ref}$, hence the output of the min function becomes $w_{ref}=w^*_{ref}$. It is common to take $w^*_{ref}=1.2$ \cite{eisa2019modeling,eisa2019nonlinear}. 

\subsection{\ha{Unifying The Control Objectives Over all Regions: A Nonsmooth Objective Functional}}
As for generalizing an expression for $P_{mech}$ for all wind speeds (Regions 1-3 in Figure \ref{PmechWref}), it is not as straightforward as what we did with $w_{ref}$; $P_{mech}$ is the objective itself that the WTPS should be designed to achieve \cite{eisa2019modeling,eisa2019nonlinear}: (i) maximization of $P_{mech}$ when physically it is not possible to produce the rated level, $P_{stl}$,  which is usually normalized to be 1 Per Unit \cite{eisa2019modeling,eisa2019nonlinear}, due to lower wind speeds (Regions 1-2); and (ii) maintaining $P_{mech}$ at the rated level, i.e., $P_{mech}=P_{stl}$ when physically $P_{mech}$ can exceed $P_{stl}$ (Region 3). Hence, unification of Regions 1-3 can be achieved by generalizing the control objective itself, i.e., the objective functional. In fact, the authors in \cite{johnson2012assessment} introduced an objective function that achieves the above-mentioned control objective. Within the context of an optimal control problem, their expression of the objective functional can be expressed as:
\begin{align}\label{maxminobjective}
   \max\limits_{u} \;\int_{t_0}^{t_f}{\min(P_{mech}  (\x(t),u(t)),P_{stl})}dt.
\end{align}
\normalsize
In \eqref{maxminobjective} whenever $P_{mech}$ is less than the rated power, $P_{stl}$, the min function will return the $P_{mech}$ argument which leads to maximization of power production; if $P_{mech}$ is more than the rated power $P_{stl}$, then the min function will return the constant $P_{stl}$ argument.
One major problem with \eqref{maxminobjective} is that it does not have an isolated maximum when the min function returns the constant $P_{stl}$ argument (one side of the function is constant). To resolve this issue, we introduce a well-defined new objective functional which achieves the same objective of \eqref{maxminobjective} but with an isolated maximum. This generalized objective functional that will unify the control objective over Regions 1-3 is expressed as:
\begin{align}\label{eq:objfunWPTSintegral}
    \begin{split}
        \max\limits_{u} \; \int_{t_0}^{t_f}&\omega(\x(t),u(t))dt=\phi(u),
    \end{split}
\end{align}
\normalsize
where
\begin{align}\label{eq:objfunWPTSintegrand}
    \begin{split}
    \omega(\x(t),u(t))
    &={\min({P_{stl}},P_{mech}(\x(t),u(t)))}\\
    &-{\min(0,{P_{stl}}-P_{mech}(\x(t),u(t)))}
        \times ({P_{stl}}-P_{mech}(\x(t),u(t)))=:h_{11}.
    \end{split}
\end{align}
\normalsize
In relevance to our results in the next subsection where we introduce our optimal control formulation, one needs to have the objective functional in the Mayer form. To convert \eqref{eq:objfunWPTSintegral} from Lagrange to Mayer form, we introduce an auxiliary variable $x_{aux}$ as:
\begin{align} \label{auxillary state}
    \begin{split}
        &\dot{x}_{aux}(t)=h_{11}, \qquad x_{aux}(t_0)=0.
    \end{split}
\end{align}
\normalsize
With this definition, we get $\phi(u)=\int_{t_0}^{t_f}\omega(\x(t),u(t))dt=x_{aux}(t_f)$, and hence \eqref{eq:objfunWPTSintegral} is replaced by  
\begin{align} \label{objectivefunctionalWTPS}
    \begin{split}
        \max\limits_{u} \text{ } \phi(u)=\max\limits_{u} \text{ } x_{aux}(t_f).
    \end{split}
\end{align}
This objective functional, again, is generalized over Regions 1-3 and unifies both the control objectives (maximizing power production at lower wind speeds and maintaining the power production at the rated level for higher wind speeds). To elaborate further, observe that in \eqref{eq:objfunWPTSintegrand} whenever $P_{mech}$ is less than the rated power \ha{(which happens at low wind speeds, where we need to maximize power production)}, then what we get from Eq.\ \eqref{eq:objfunWPTSintegrand} is $P_{mech}-0$,
which leads to the maximization (minimization with negative sign) of power production. However, if $P_{mech}$ is greater than the rated power $P_{stl}$, then Eq.\ \eqref{eq:objfunWPTSintegrand} returns $P_{stl}-(P_{stl}-P_{mech})^2$,
where the function is not constant anymore (hence, we resolve the issue of \eqref{eq:objfunWPTSintegral}) and then the difference $(P_{stl}-P_{mech})^2$ is minimized, keeping $P_{mech}$ at the rated level $P_{stl}$. \ha{This generalization of the objective functional over all wind speeds is achieved thanks to the presence of nonsmooth functions in the objective functional.}

\section{LD-Derivatives Based Method: A Novel Optimal Control Approach for WTPSs}\label{section:IV}

Note that the ODE associated with $h_2$ contains $P_{mech}$, and thus the piecewise control $u=\theta$ (i.e., $n_u=1$). We also note that for $w_{ref}$ and the objective functional, we use the nonsmooth expressions in \eqref{w_ref} and \eqref{objectivefunctionalWTPS}, respectively, that were  introduced in the previous section. Recall from \eqref{auxillary state} that to have the objective functional in Mayer form we need to introduce an extra differential state variable, $x_{aux}$, to the system of Eqs.\ in \eqref{WTPS_model}. Hence, we get a total of eleven differential state variables and one algebraic state variable (i.e., $n_x=11$ and $n_y=1$) and the time derivative of this extra state variable will appear coupled with the system \eqref{WTPS_model}  
as given in Eq.\ \eqref{auxillary state}. 
With the control parameters \ha{$\pp=(\bm{u}_{(1)},\dots,\bm{u}_{(n_s)})$} in the discretization  
\begin{equation}\label{eq.control_param_full.latersection} 
\ps{
\uu(t) 
  =\textbf{u}_{(i)} \quad \text{ if } t \in (\tau_{i-1},\tau_{i}],
}
\end{equation} 
as the decision variables, we used the MATLAB\textsuperscript{\textregistered} built-in NLP solver, \texttt{fmincon}, to solve the NLP problem arising from this problem. (We note that $\tau_i$ can also be taken as decision variables for more accuracy but in our case we only consider controlling $u_{(i)}$.)
Because participating functions are nonsmooth, conventional methods (such as \texttt{fmincon}) may fail.
To overcome this, the LD-derivative sensitivity functions as well as the generalized gradient  elements of the objective functional obtained via Theorem \ref{thm:cdc}  are supplied to the solver \texttt{fmincon} (which does not require such inputs in the smooth setting, as this method can generate these objects itself given a smooth ODE/DAE control problem).

\pgs{Assuming that \eqref{WTPS_model} admits a  regular solution on $[t_0,t_f]$, i.e.\ \eqref{eq.clarkeregularity} is satisfied along the solution, it must hold that $\frac{\partial g}{\partial V} \neq 0$ along the solution since in this case $g$ is smooth and we have that $\pi_y \partial g=\{\frac{\partial g}{\partial V}\}$ (see Eq.\ (8) in \cite{eisa2021sensitivity}). In this case, Theorem \ref{thm:cdc} yields a generalized gradient of  \eqref{eq.OCP.gengradient} by building the auxiliary system \label{NonSmooth_Optimal_eq:4} whose solutions are LD-derivative sensitivity functions. This can be accomplished by applying LD-derivative rules, as outlined in Section \ref{subsection:IIA}.  
The sensitivity equations associated with $g$ and $h_j$, $j \notin \{4,5,11\}$, are the classical sensitivity equations:
\begin{align}
\dot{\textbf{X}}_{x_j}(t)
&=
\frac{\partial h_j}{\partial u} \coord_{(i)}^{\rm T}+
\frac{\partial h_j}{\partial \x} \textbf{X}(t)+
\frac{\partial h_j}{\partial V} \textbf{V}(t), \qquad t\in(\tau_{i-1},\tau_i]
\label{eq.sensitivities.smoothX},
\end{align} 
\normalsize
with partial derivatives  evaluated at $(t,u_{(i)},\Tilde{\x}(t),\Tilde{V}(t))$, e.g., $$\dot{\textbf{X}}_{E_{q}}(t)=\frac{1}{0.02}\left[\textbf{X}_{E_{qcmd}}(t)-
\textbf{X}_{E_{q}}(t)\right].$$ 
Similarly, the algebraic equation Eq.\ \eqref{model11} gives the following sensitivity equation:
\begin{align}
\zero_{1 \times n_s}
&=\frac{\partial g}{\partial E_{q}} \textbf{X}_{E_{q}}(t)+\frac{\partial g}{\partial I_{plv}} \textbf{X}_{I_{plv}}(t)+\frac{\partial g}{\partial V} \textbf{Y}(t).\label{eq.sensitivities.smoothY} 
\end{align}
\normalsize
For the nonsmooth functions $h_4,h_5,$ and $h_{11}:=\dot{x}_{aux}$,  we apply the theory of LD-derivatives to get the corresponding sensitivity equations, i.e., for $h_4$ we get:  
\begin{align}
\label{eq.sensitivities.h4} 
\begin{split}
&\dot{\textbf{X}}_{f_{1}}(t)=h_{4}'(t,u(t),\Tilde{\x}(t),\Tilde{V}(t);(\zero_{1 \times n_s},\coord_{(i)}^{\rm T},\textbf{X}(t),\textbf{Y}(t)))\\
&=\textbf{X}_{w_g}(t)-\left[w_{ref}\right]'\\
&=\textbf{X}_{w_g}(t)-\left[\min(-0.75P^2_{elec}+1.59P_{elec}+0.63,w^*_{ref})\right]'\\
&=\textbf{X}_{w_g}(t)-\lshiftmin ([q(P_{elec}) \quad (-1.5P_{elec}+1.59) P_{elec}'],[ w^*_{ref} \quad \bm{0}_{1\times n_s}]), \quad \forall t\in(\tau_{i-1},\tau_i],
 \end{split}
 \end{align}  
\normalsize 
where $q(P_{elec})=-0.75P^2_{elec}+1.59P_{elec}+0.63$ and $P_{elec}'=\frac{\partial P_{elec}}{\partial u}\coord_{(i)}^{\rm T} +\frac{\partial P_{elec}}{\partial \x}\textbf{X}(t)+\frac{\partial P_{elec}}{\partial V}\textbf{Y}(t)$. Similarly, for $h_5$ we have:
\begin{equation}\label{eq.sensitivities.h5}  
\begin{aligned}
\dot{\textbf{X}}_{P_{inp}}(t)
&=\frac{1}{T_{pc}} \left[ K_{itrq}(w_g+w_0)\right] \textbf{X}_{f_1}(t)
-\frac{\textbf{X}_{P_{inp}}(t)}{T_{pc}}+\bigr[ K_{ptrq}(w_g+w_0-w_{ref})+K_{itrq} f_1\\
&+K_{ptrq}(w_g+w_0) \bigr] \frac{\textbf{X}_{w_g}(t)}{T_{pc}}
-\frac{K_{ptrq}(w_g+w_0)}{T_{pc}}\left[w_{ref}\right]'.
\end{aligned}
\end{equation}
\normalsize
For $h_{11}$ we get:
\begin{align}\label{eq.sensitivities.h11}  
&\dot{\textbf{X}}_{x_{aux}}(t)\\
&=\left[ {\min(P_{stl},P_{mech})}-{\min(0,{P_{stl}}-P_{mech})} ({P_{stl}}-P_{mech})\right]' \nonumber\\
&=\lshiftmin([P_{stl} \quad \bm{0}_{1\times n_s}],[P_{mech}  \quad \frac{\partial P_{mech}}{\partial u}\coord_{(i)}^{\rm T} +\frac{\partial P_{mech}}{\partial \x}\textbf{X}(t)]) \nonumber \\
&-\lshiftmin([0 \quad \bm{0}_{1\times n_s}],[P_{stl}-P_{mech}\quad -\frac{\partial P_{mech}}{\partial u}\coord_{(i)}^{\rm T} -\frac{\partial P_{mech}}{\partial \x}\textbf{X}(t)]) \times({P_{stl}}-P_{mech}) \nonumber\\
&+{\min(0,{P_{stl}}-P_{mech})}(\frac{\partial P_{mech}}{\partial u}\coord_{(i)}^{\rm T} +\frac{\partial P_{mech}}{\partial \x}\textbf{X}(t)).
\end{align} 
\normalsize
Recall that the initial conditions for $\textbf{X}(t_0)$ are zero, and it follows from Eq.\ \eqref{NonSmooth_Optimal_eq:4} that $\textbf{Y}(t_0)$ must satisfy
\begin{align*}
\zero_{1 \times n_s n_u}
&=g'(t_0,\x_0,\y_0;(\zero_{1 \times n_s n_u},\zero_{n_x \times n_s n_u},\capy(t_0)))
=\frac{\partial g}{\partial V} \textbf{Y}(t_0)
\end{align*}
with $\frac{\partial g}{\partial V}\neq 0$ by regularity, so that $\textbf{Y}(t_0)=\zero_{1 \times n_s n_u}$ must hold. Finally, the generalized gradient $\bm{\mu}$ of the objective functional in \eqref{objectivefunctionalWTPS} can be calculated as: $
    \bm{\mu}^T= \textbf{X}_{x_{aux}}(t_f) \in \partial \phi(\pp_0). $}

\subsection{Simulation Results of LD-Derivatives Based Method}\label{subsection:IIIC}
In order to test the optimal control formulation presented in the previous subsections, we expose the WTPS to variations in the wind speed, $v_{wind}$. We consider three simulation cases for $v_{wind}$: the ramp profile in Eq.\ \eqref{v_wind,ramp}, the Gaussian profile in Eq.\ \eqref{v_wind,Gaussian}, and a wind profile produced from actual wind speed data (real-world measurements) that was used to validate the DAE model \eqref{WTPS_model} as illustrated in \cite{eisa2019modeling,eisa2019nonlinear,eisa2018wind}. We note that we have carefully chosen our profiles such that it gurantees the WTPS will transition between Regions 1-3 in Figure \ref{PmechWref}. Hence, we test directly the effectiveness of our approach when the system passes through unavoidable nonsmoothness.  \ha{We note that the reader can refer to this GitHub repository \cite{githubwtps} for the MATLAB files used to produce the results discussed in this subsection.}

In the upper panel of Figure \ref{v_0=10, Ramp}, we show the first case of a ramp wind profile with an initial wind speed $v_0=10 \text{ m/s}$ and a final wind speed $v_f=12 \text{ m/s}$. We run the simulation from $t_0=18 \text{s}$ to $t_f=22 \text{s}$, with the number of time steps $n_s = 20$, and with the start and end times of the ramp at $t_{on}=19 \text{s}$ and $t_{off}=21 \text{s}$, respectively. The corresponding optimal control response, $u(t)$, to the change in $v_{wind}$ is depicted in the lower left panel, and the effect of the control response on $P_{mech}$ is depicted in the lower right panel. We notice that there is no change in $u(t)$ until about $v_{wind} = 11.4 \text{ m/s}$, which is the critical value of $v_{wind}$ at which there is nonsmoothness in the objective functional \eqref{objectivefunctionalWTPS} (i.e.\    $P_{mech}=P_{stl}$). After reaching that point, $u(t)$ increases to keep $P_{mech}$ at the rated power $P_{stl}=1$.

\begin{figure}[ht!]
\centering
\centerline{\includegraphics[width=0.8\columnwidth]{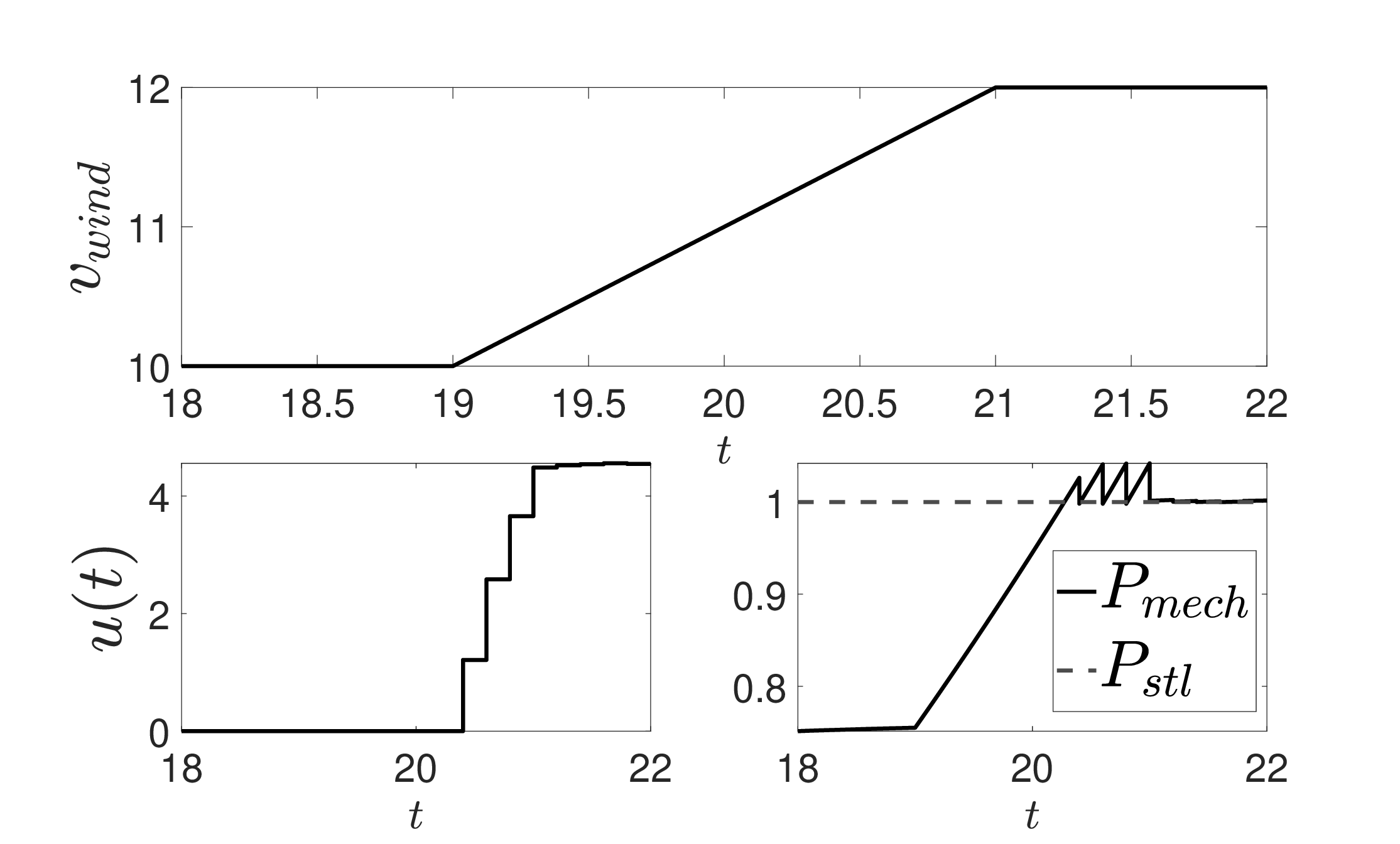}}
\caption{The upper panel shows the ramp profile of $v_{wind}$ with an initial wind speed $v_0=10 \text{ m/s}$, and a final wind speed $v_f=12 \text{ m/s}$. The lower left panel shows the optimal control response, $u(t)$, to the change in $v_{wind}$. The lower right panel shows the effect of the control response on $P_{mech}$.}\label{v_0=10, Ramp}
\end{figure}

In the upper panel of Figure \ref{v_0=10, Gaussian} we show the second case with a Gaussian wind profile. We run the simulation from $t_0=18 \text{s}$ to $t_f=22 \text{s}$, with the number of time steps ${n_s} = 20$, $v_0=10 \text{ m/s}$, mean $\mu=20$, and standard deviation $\sigma = 0.5$. The corresponding optimal control to the change in $v_{wind}$ is depicted in the lower left panel, and the  effect of the control response on $P_{mech}$ is depicted in the lower right panel. This change of $u(t)$ keeps the value of $P_{mech}$ close to the value of the rated power $P_{stl}=1$.

\begin{figure}[ht!]
\centering
\centerline{\includegraphics[width=0.8\columnwidth]{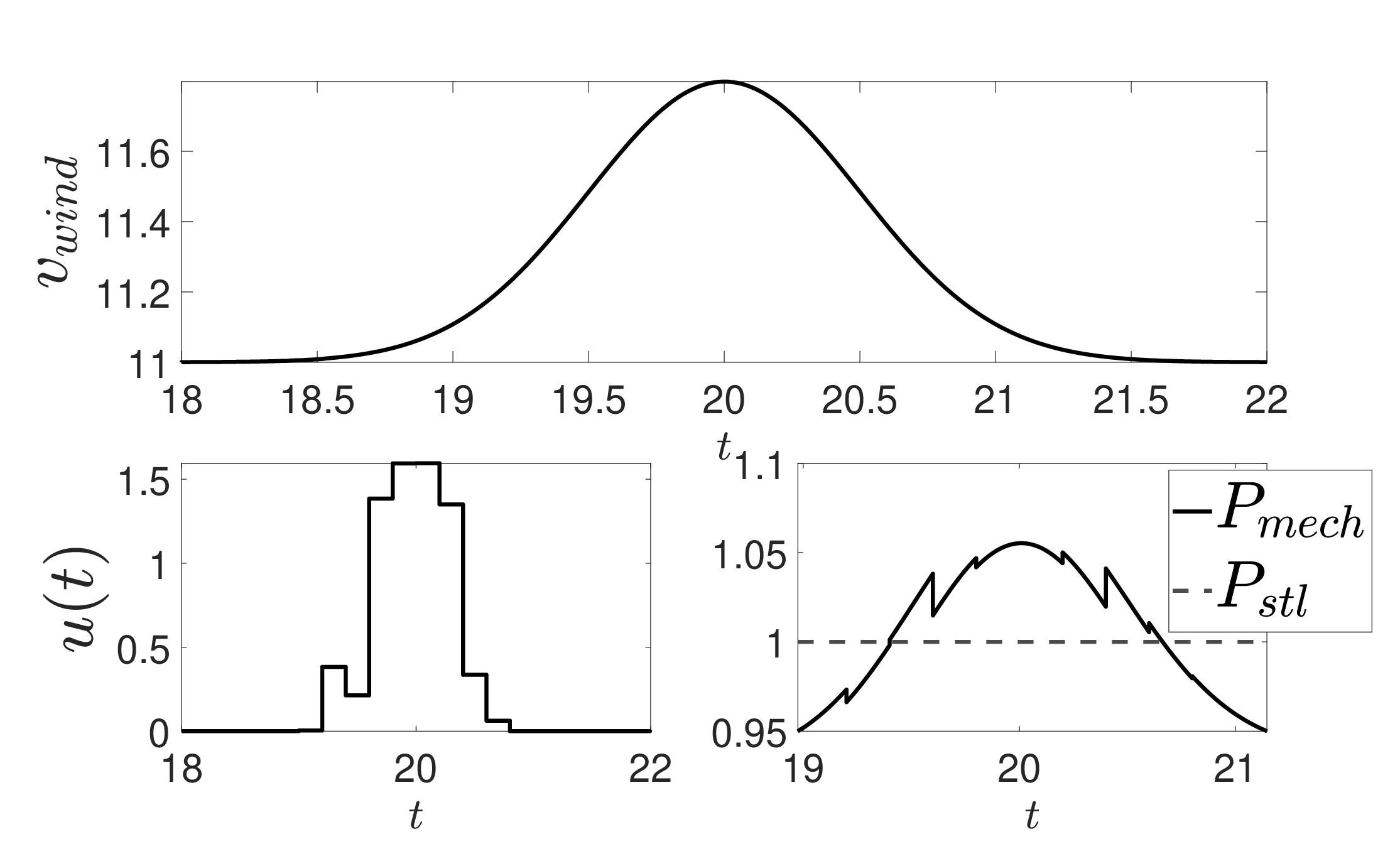}}
\caption{The upper panel shows the Gaussian profile of $v_{wind}$ with an initial wind speed $v_0=10 \text{ m/s}$, and standard deviation $\sigma = 0.5$. The lower left panel shows the optimal control response, $u(t)$, to the change in $v_{wind}$. The lower right panel shows the effect of the control response on $P_{mech}$.}
\label{v_0=10, Gaussian}
\vspace{0em}
\end{figure}

Finally, Figure \ref{RealWindData} shows actual real-world wind speed data used for data validation in \cite{eisa2019nonlinear,eisa2019modeling,eisa2018wind}. Here, we take the number of time steps to be $n_s = 425$. In Figure \ref{RealWindData}, the wind speed profile, $v_{wind}$, is shown as well as the corresponding control response and the resulting $P_{mech}$. We provide a close-up look of one Region of interest with very abrupt changes in wind speed. It is clear that the mechanical power is maximized for low $v_{wind}$ when $P_{mech}$ is less than the rated value $P_{stl}$. However, for higher wind speeds, the power is maintained at the rated value $P_{stl}=1$. This shows that the solution obtained using the sequential LD-derivative optimal control method maximizes the nonsmooth objective functional in a highly-turbulent wind profile where the system passes through points of nonsmoothness transitioning between Regions 1-3. Moreover, it can be seen that in this scenario the optimal control implementation was on a large time horizon of varying wind speeds, which demonstrates the effectiveness of the proposed framework and its general applicability to WTPSs. 

\begin{figure}[ht!]
\centering
\centerline{\includegraphics[width=0.8\columnwidth]{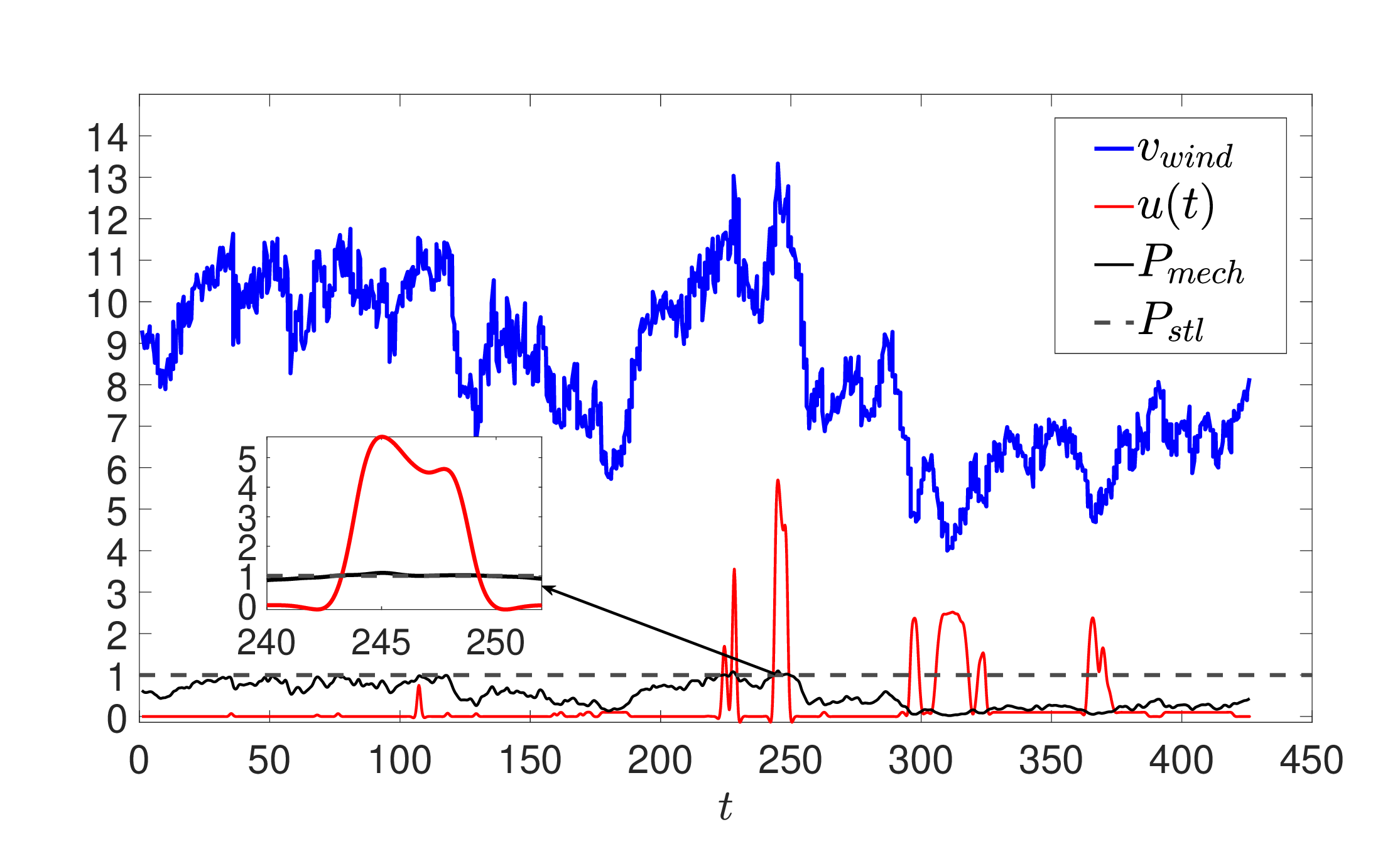}} 
\setlength{\belowcaptionskip}{0pt}
\caption{Our approach handling a real-world, high-turbulent wind data. The wind speed profile is shown as well as the corresponding control response and the resulting $P_{mech}$. The zoomed-in portion illustrates more clearly how the optimal control solution and the resulting $P_{mech}$.
}\label{RealWindData}
\end{figure}

Having verified the LD-derivatives based sequential nonsmooth optimal control method for the WTPS control problem via simulations, \ha{we provide the following observations, including the application potential of our optimal control method:}
\begin{enumerate}
    \itemsep-0.5em 
    \item It is noticeable that in general the optimal control trajectory seems to be close/matching in its shape to the wind profile which is useful information in control design.
    \item The current framework can adopt more control inputs (e.g., torque/tip-ratio based control inputs).
    \item Bounds and limiters as desired by the user, or similar to what is implemented in industry (e.g., \cite{eisa2019nonlinear}), can be easily added as extra constraints to this nonsmooth optimal control formulation.
    \item The current work can be extended  to involve reactive-power-control using the Q Droop function (see \cite{eisa2019modeling}) as a control input in fault cases.
    \item \textcolor{black}{For practical purposes, the proposed framework is useful in the design and analysis phase of control designs of WTPSs as it provides the optimal control trajectory (without linearization of the system or approximations of the nonsmooth functions present in the system and/or the objective functional). This can be then used to judge the effectiveness of other control methods (including real-time control methods). That is, the optimal control solution can serve as a reference of performance assessment to developing/proposed 
 control designs.}
    \item \textcolor{black}{The provided method can be implemented in near-real-time setting with the aid of high processing power and/or parallel processing handling the NLP.}  
\end{enumerate}
\ha{We also provide the following limitations:
\begin{enumerate}
 \itemsep-0.5em 
    \item We are not able to use second-order derivatives information within the optimal control problem due to the current absence of second order LD-derivatives theory in the literature.
    \item As is the case with other popular optimal control methods in the same vein, a suitable choice of initial conditions for the states and controls remains a challenge in our formulation.
\end{enumerate}}


In the next section, we make the case clearer for why the proposed nonsmooth LD-derivatives based sequential optimal control approach is needed as opposed to other alternatives.

\ha{\section{Comparison Between our Proposed LD-Derivatives Based Method and Alternative Approaches for the WTPS Control Problem}\label{section:V}}
In this section, \ps{we consider alternative approaches to solving the WTPS optimal control problem. In particular, instead of treating the nonsmoothness directly, like the proposed LD-derivative based method above,} we consider \ps{methods that avoid the nonsmoothness altogether}.
We broadly group possible alternatives into two approaches: (i) smoothing approaches (where \ps{nonsmooth functions are} approximated by \ps{smooth functions and standard methods are then  employed}) and (ii) ``naive'' approaches (where \ps{standard methods are immediately employed and} any \ps{theoretical/computational} issues stemming from the nonsmoothness are simply ignored). 

Before proceeding, we note that in our comparative results we consider the nonsmoothness present in the objective functional by limiting the wind speed range over Regions 2-3. We also interpolated the piecewise constant control solution for improved visualizations in all figures in this section. \ha{We note that for all simulations in this section, we have used the fmincon package in MATLAB\textsuperscript{\textregistered}, where we specify the following options:
\begin{itemize}
    \item We use the sequential quadratic programming (SQP) algorithm to solve the NLP problems in all of the approaches presented below.
    \item For the LD-derivatives based solutions, we provide LD-derivative information as the gradient information to be used in its corresponding NLP problem.
    \item We use the Broyden–Fletcher–Goldfarb–Shanno (BFGS) algorithm in all of the approaches presented below, which provides an approximation to the Hessian matrix in the NLP problems. In the case of the LD-derivatives based approach, the BFGS uses the first-order LD-derivative information. 
\end{itemize}}

\subsection{Comparison with a Smoothing Approach}\label{subsection:IVA}

We approximate the nonsmooth objective functional in \eqref{objectivefunctionalWTPS} (by replacing the nonsmooth expression $\omega$ in \eqref{eq:objfunWPTSintegrand})  using a known logarithmic approximation for the min function \cite{cook2011basic}. Thus, the smoothed version is:
\begin{align}\label{smoothingobjectivefucntionalWTPS}\begin{split}
\omega \approx \widehat{\omega}=\frac{\ln(e^{-NP_{stl}}+e^{-NP_{mech}})}{-N}-\frac{\ln(e^{0}+e^{-N(P_{stl}-P_{mech}))})}{-N}(P_{stl}-P_{mech}),
\end{split}
\end{align}
where $N$ is the smoothing parameter, with $\widehat{\omega} \to \omega$ as $N \to \infty$.  Now, we solve the WTPS optimal control problem in a similar manner to the previous section (our LD-derivatives based approach) \ps{but with a smooth optimal control solver after} using the smoothed objective functional above.
Two results are presented in Figures \ref{Smoothing, Ramp}-\ref{Smoothing, Gaussian}. Figure \ref{Smoothing, Ramp} shows the applied ramp wind profile and how different choices of $N$ affect the accuracy of the solution. The upper panel shows the profile of $v_{wind}$ with an initial wind speed $v_0=10 \text{ m/s}$, a final wind speed $v_f=12 \text{ m/s}$, start time and end time of the ramp at $t=19\text{s}$, $t=21\text{s}$, respectively. The lower left panel shows the optimal control response, $u(t)$, to the change in $v_{wind}$ using the introduced  LD-derivatives based sequential nonsmooth optimal control approach with Eq.\ \eqref{objectivefunctionalWTPS} as the objective functional, compared with the smoothed objective functional in Eq.\ \eqref{smoothingobjectivefucntionalWTPS} with different values of the smoothing parameters $N$. The lower right panel shows the effect of each of the control response trajectories on $P_{mech}$. The trajectory of $P_{mech}$ corresponding to the (optimal) trajectory of $u$ obtained using the LD-derivatives based approach effectively drives the dynamics as we expect, based on the nonsmooth objective functional in Eq.\ \eqref{objectivefunctionalWTPS} (i.e.\  maximizing $P_{mech}$ when $P_{mech}<P_{stl}$, and maintaining $P_{mech}$ at the rated level, i.e., $P_{mech}=P_{stl}$, when it is physically  possible to produce $P_{mech} > P_{stl}$). As for the smoothing approximation approach, it seems that the smoothed trajectories converge to the optimal trajectory as the value of $N$ gets larger.
Similarly, in Figure \ref{Smoothing, Gaussian}, we show the applied Gaussian wind profile with $v_0=11 \text{ m/s}$, mean $\mu=20$, and standard deviation $\sigma = 0.5$. Similar to the ramp wind profile case, the optimal trajectory given by the LD-derivatives based approach is effective in steering the system's dynamics as desired, according to objective functional in Eq.\ \eqref{objectivefunctionalWTPS} and the criteria described above in the ramp wind profile case. In this case also, the effectiveness of the smoothed approach depends on how large $N$ is.  \ha{We emphasize that, here and in general, there are typically no theoretical guarantees that a smoothing approximation approach solution will converge to the true solution as the approximation converges to the nonsmooth function. The LD-derivative based solution, on the other hand, directly treats the nonsmoothness without approximations, and hence only faces other unavoidable numerical errors (e.g., rounding error), which is a clear advantage.}

\begin{figure}[!h]\centering
{\includegraphics[width=0.9\columnwidth]{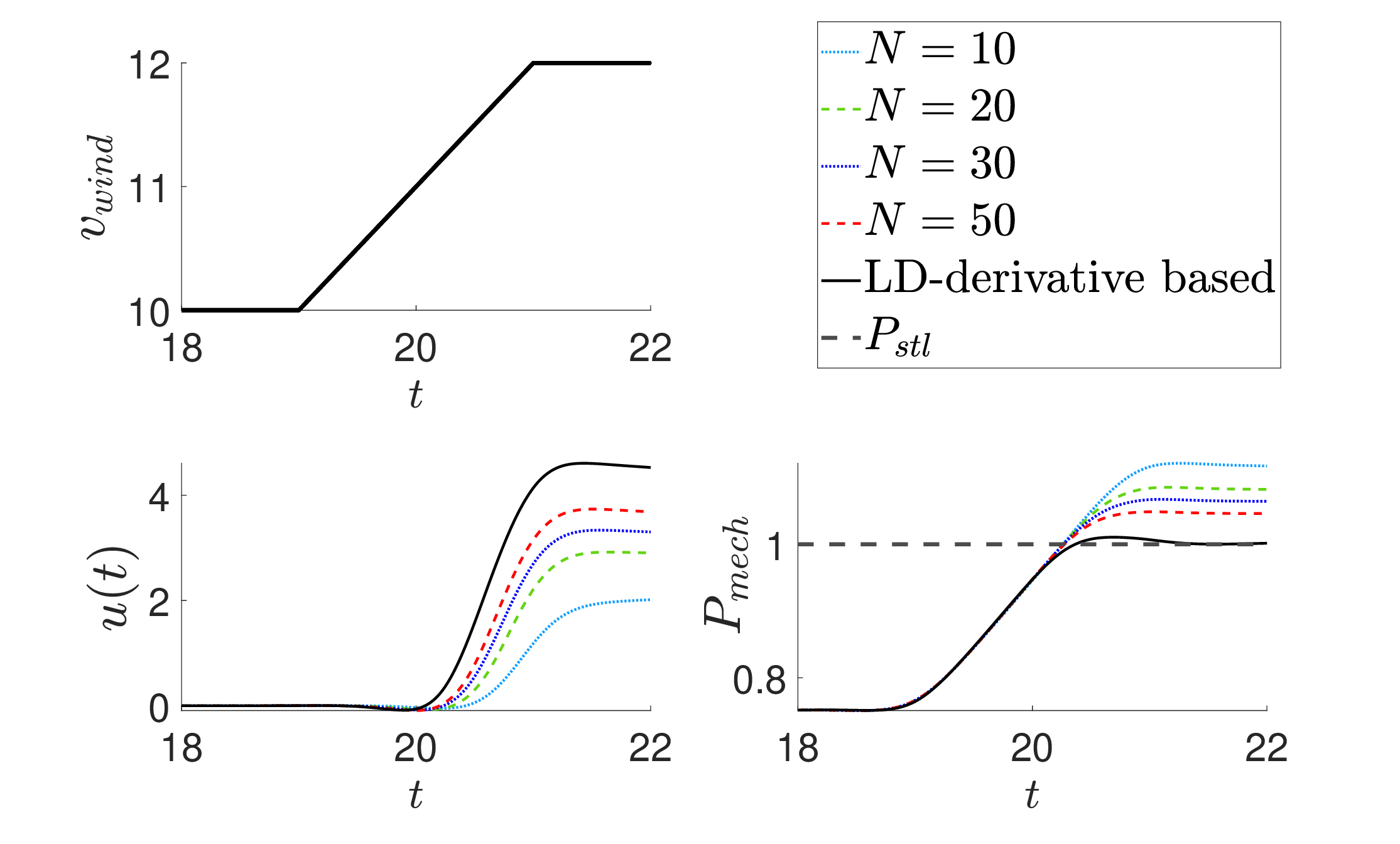}}
\caption{Smoothing approximation with different values of the smoothing parameters $N$ vs. LD-derivatives based sequential approach with a ramp wind profile.
} \label{Smoothing, Ramp}
\end{figure}

\begin{figure}[!h]\centering
{\includegraphics[width=0.9\columnwidth]{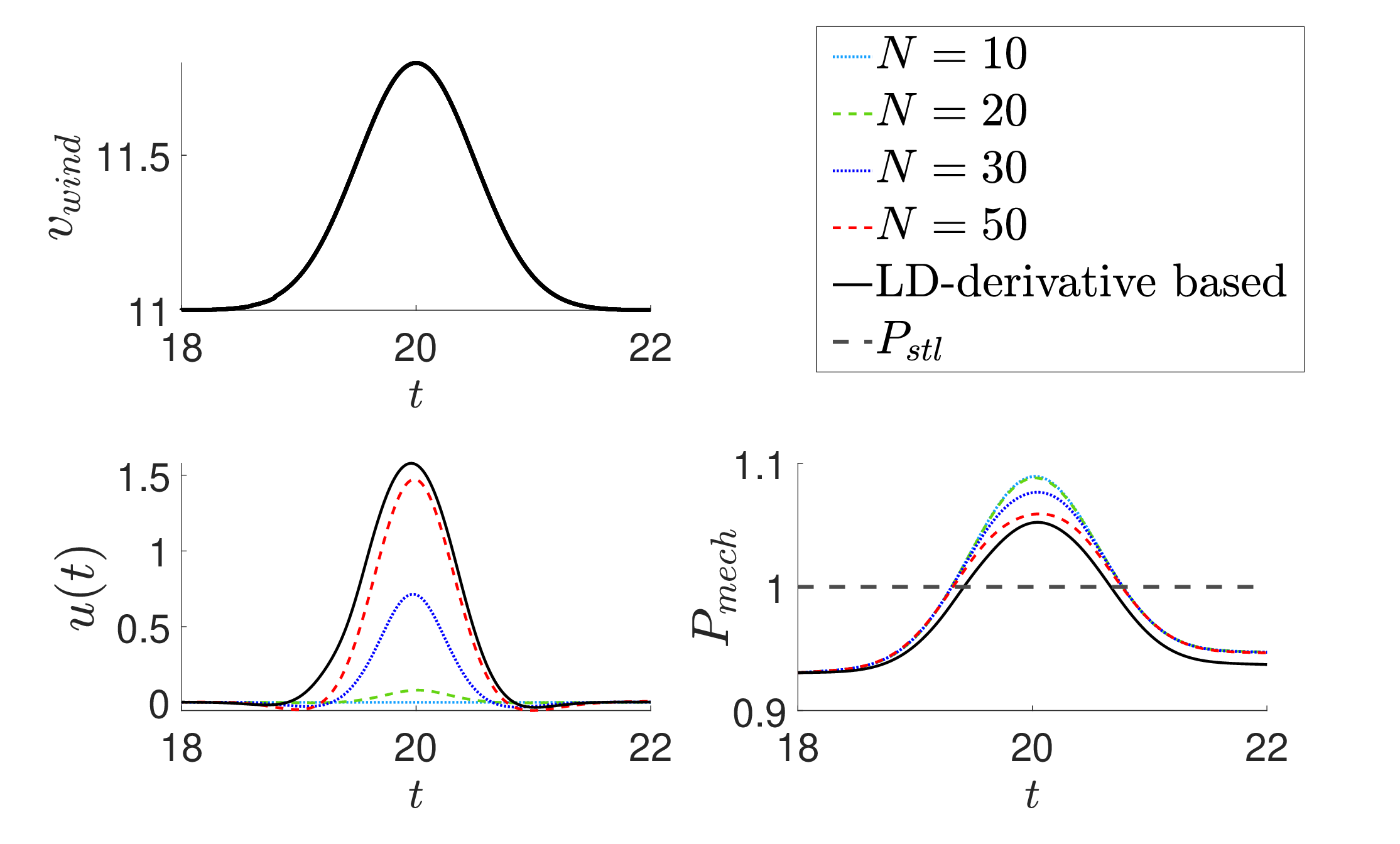}}
\caption{Smoothing approximation with different values of the smoothing parameters $N$ vs. LD-derivatives based sequential approach with a Gaussian wind profile.
\label{Smoothing, Gaussian}}
\end{figure}


\begin{figure}[h!]
    \centering
    \subfloat[]{{\includegraphics[width=0.5\columnwidth]{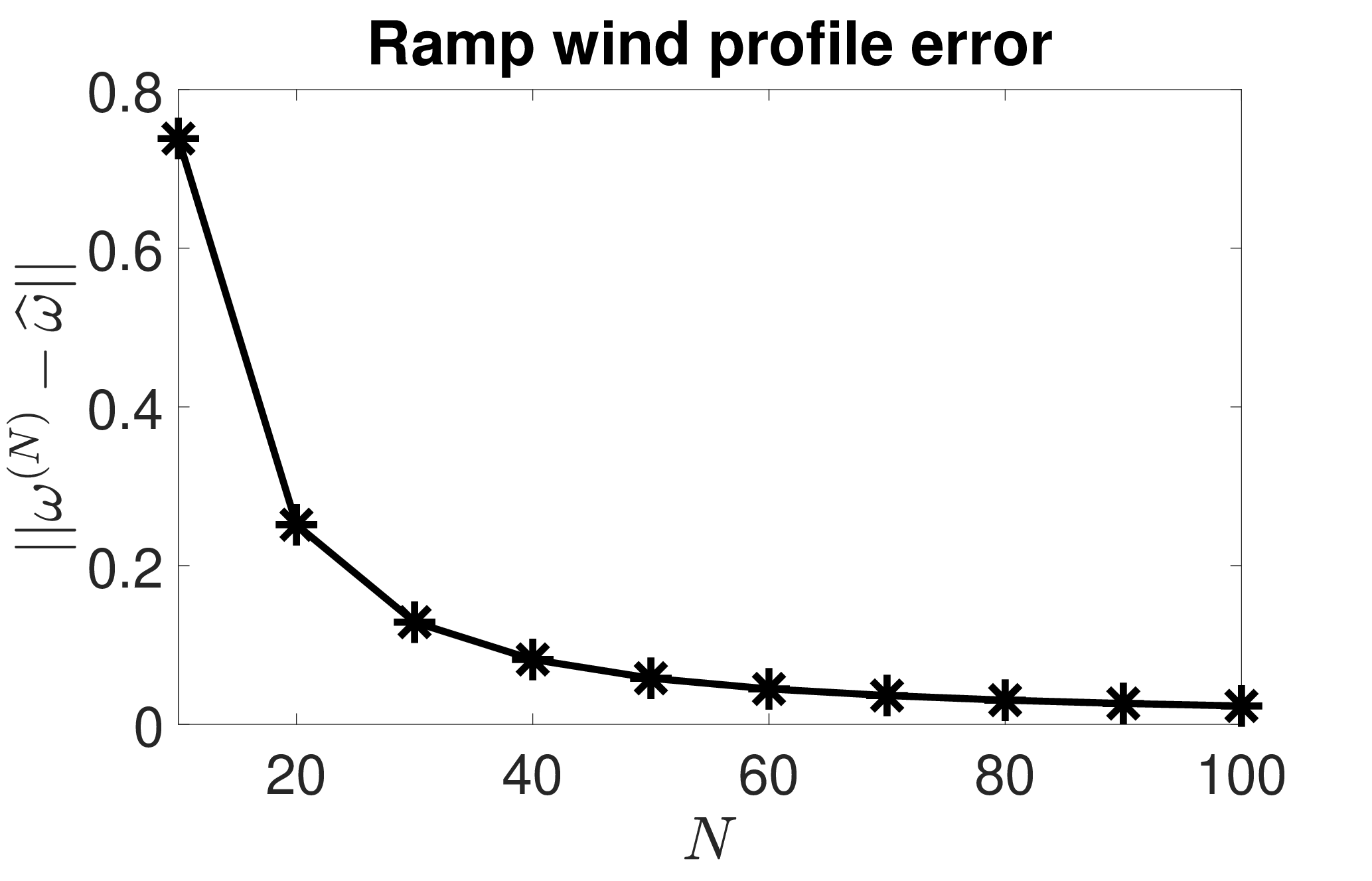}}}%
    \subfloat[\centering ]{{\includegraphics[width=0.5\columnwidth]{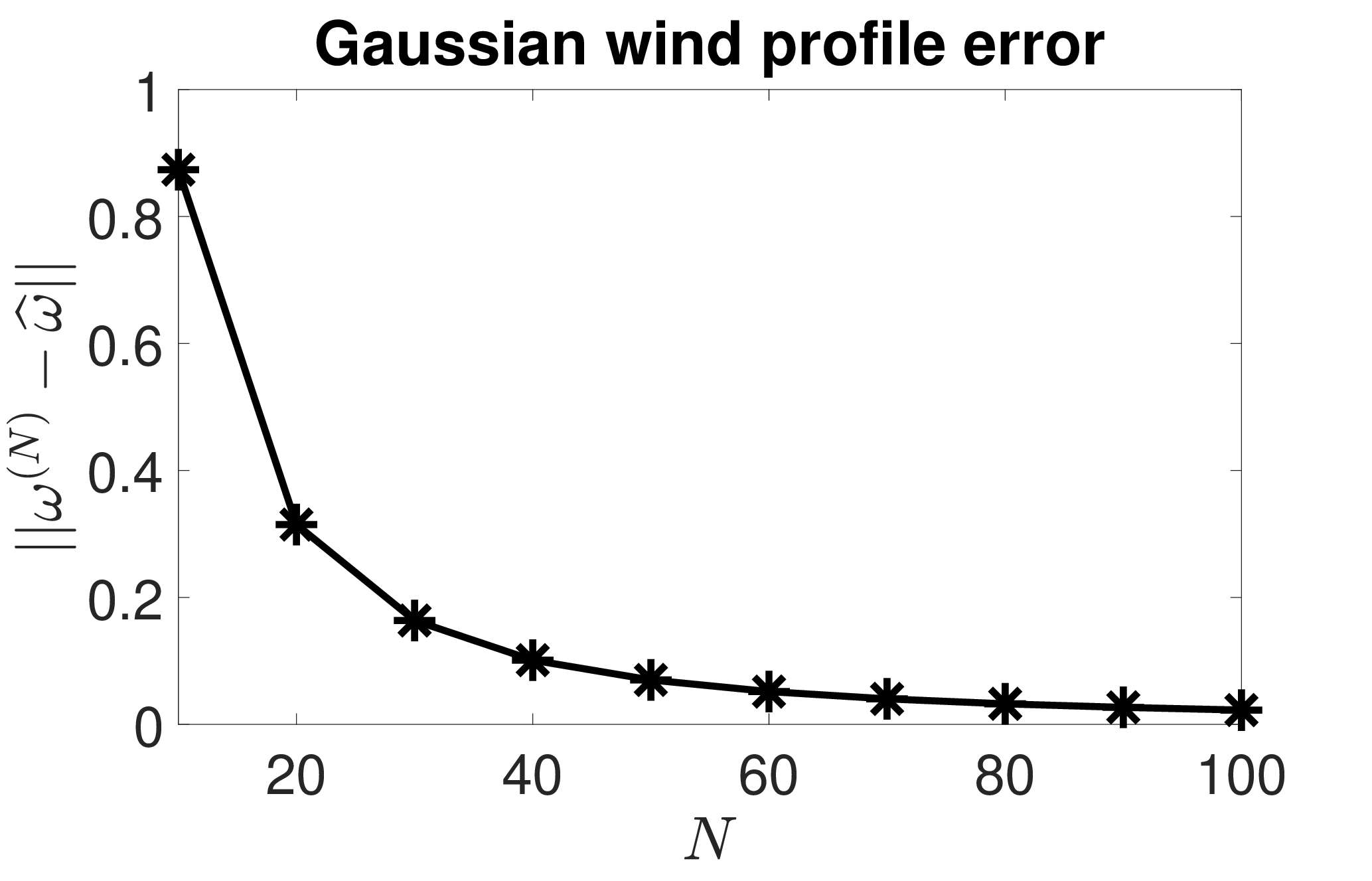} }}%
    \caption{\ha{The 2-norm of error between the nonsmooth function, $\omega$, in \eqref{eq:objfunWPTSintegrand}, and the smoothed  function $\widehat{\omega}^{(N)}$ in \eqref{smoothingobjectivefucntionalWTPS}, evaluated at different values of the smoothing parameters $N$ for a ramp wind profile (left panel) and a Gaussian wind profile (right panel).}} %
     \label{Fig:2normError_Smoothing}
\end{figure}

\ha{A smoothing approach can cause numerical errors that are not traceable. In addition, approximations based on smoothing usually require user-defined parameters that may alter the original problem in an arbitrary manner; in fact, this point was demonstrated in a detailed study by the authors in \cite{eisa2021sensitivity} with relevance to WTPSs.}  
\subsection{Comparison with a Naive Approach}\label{subsection:IVB}

 The second approach is \ha{the direct application of an off-the-shelf smooth optimal control solver to the system} exhibiting the nonsmoothness without any modifications (i.e., the nonsmoothness is ignored).  
In particular,  we compare our approach vs. solving the nonsmooth WTPS optimal control problem using a popular smooth optimal control solver, GPOPS-II \cite{patterson2014gpops},  without providing any (generalized) derivative information. That is, we ignore the presence of nonsmoothness completely (hence, ``naive approach'') and pass the problem to a solver without a smoothing approximation or any other alterations. GPOPS-II  \cite{patterson2014gpops} is \ha{a powerful optimal control solver that is compatible with MATLAB\textsuperscript{\textregistered}. In GPOPS-II, hp-adaptive Legendre-Gauss-Radau quadrature orthogonal collocation method is used so that the optimal control problem is transcribed to a large sparse NLP. GPOPS-II uses the simultaneous collocation method, which is another direct optimal control method that also solves a trajectory optimization problem by transforming it into an NLP problem.
Both the simultaneous collocation method and the sequential method used in this paper are considered as direct methods as opposed to indirect methods which typically yield an auxiliary set of optimality conditions that can be difficult to solve, especially in a nonsmooth setting.
The key difference between the simultaneous collocation method and the sequential method used in this paper is the discretized parameters. In the sequential method, we only discretize the controls, but in the simultaneous collocation method, we discretize both the controls and the states. The reader can refer to \cite{Kelly2017,Rao2010,patterson2014gpops,biegler2007overview} for more details. In order to achieve a specified accuracy, GPOPS-II implements an adaptive mesh refinement method for determining the number of mesh intervals and the degree of the approximating polynomial within each mesh interval.  Sparse finite-differencing of the optimal control problem functions is used to furnish all derivatives required by the NLP solver. In this subsection, we compare the results of our proposed LD-derivatives based sequential optimal control solver with the results of GPOPS-II. 
We note that the reader can refer to this GitHub repository \cite{githubwtps} for the GPOPS-II implementation used to produce the following results.} 

Moving forward to the comparison of results, 
we use the nonsmooth objective functional \eqref{objectivefunctionalWTPS} with a Gaussian wind profile in two different cases: (i) wind speed range that is strictly in Region 2 (medium wind speeds) to avoid any nonsmoothness; and (ii) wind speed range that causes transitions through the nonsmoothness in the objective functional (Regions 2-3). 
In the first case, when there is no nonsmoothness present, the solutions from our approach and the GPOPS-II are nearly identical; this is expected, but it highlights that our approach does not need a priori knowledge of nonsmoothness and recovers smooth solutions automatically. We show the results of the second case in Figure \ref{GPOPS2vsLD, Gaussian, v_0=11}, where the upper panel shows the Gaussian wind profile with mean $\mu=20$, standard deviation $\sigma = 0.5$, with different initial wind speeds. The lower left panel shows the optimal control response, $u(t)$, and the lower right panel shows the effect of the optimal control response on $P_{mech}$. Unlike the first case, we can see from the results of the second case (when nonsmoothness is present) that the solution from GPOPS-II software (which is designed for smooth dynamics/objective functional) gives inaccurate results (breaks) as can be seen in the left panels (with red curves), while the solution from the nonsmooth sequential LD-derivatives based method provides the expected trajectory as seen in the right panels (with black curves). This shows the robustness and generalized nature of the proposed method in all Regions of operation of WTPSs. 

\begin{figure}[ht!]\centering
{\includegraphics[width=0.8\columnwidth]{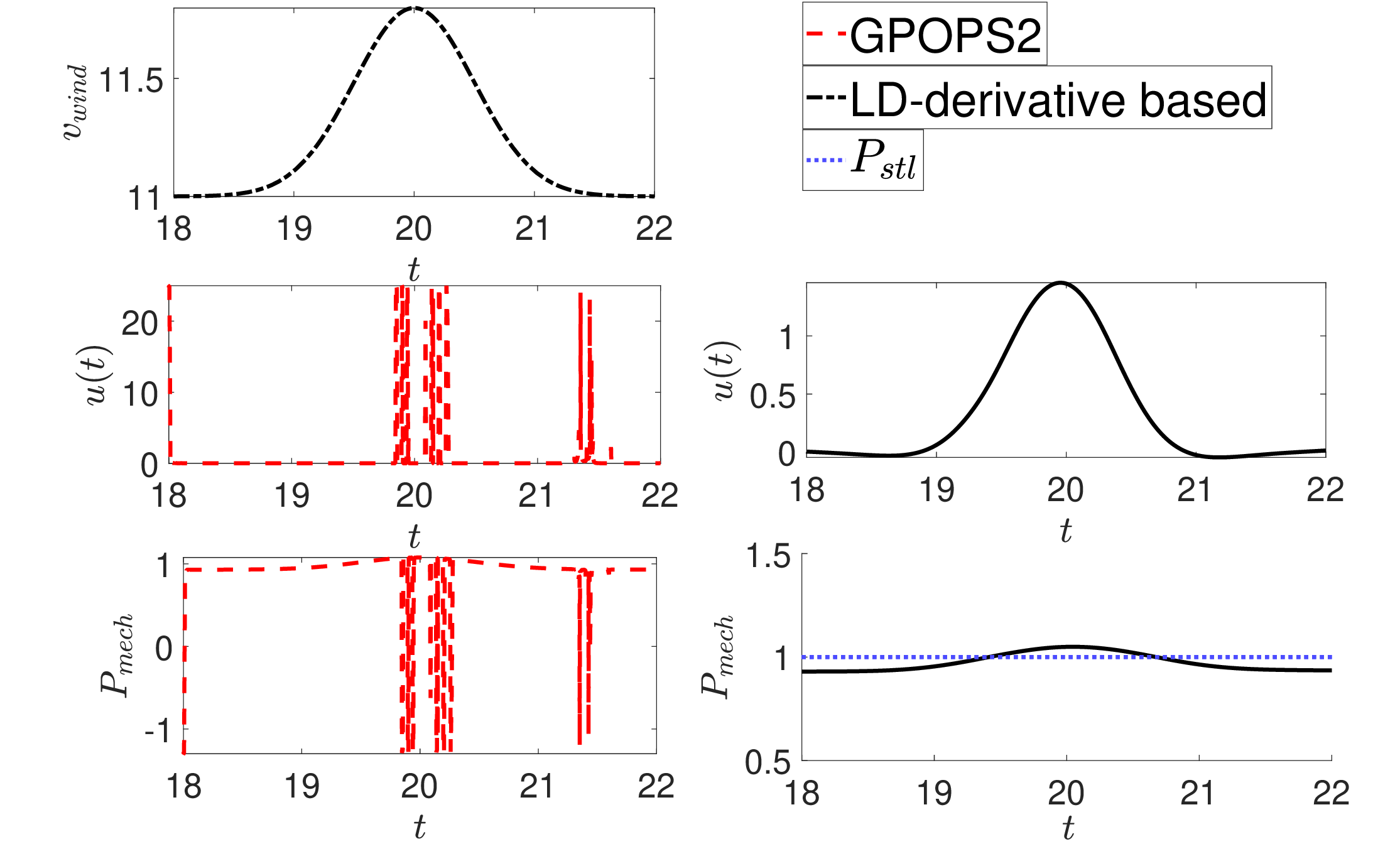}}
\caption{The collocation optimal control solution using GPOPS-II vs. the LD-derivatives based sequential approach. The upper panel shows a Gaussian wind profile with an initial wind speed $v_0=11.0 \text{ m/s}$, and a standard deviation $\sigma = 0.5$. The left panels (red curves) show the optimal control response $u(t)$ and the effect of the control response on $P_{mech}$ for the smooth solver GPOPS-II. The right panels (in black curves) show the optimal control response and the resulting $P_{mech}$ for the LD-derivatives based sequential nonsmooth optimal control method}\label{GPOPS2vsLD, Gaussian, v_0=11}
\end{figure}

  \ha{The ``naive'' approach ignores the nonsmoothness completely and hence comes with no guarantees whatsoever of convergence, accuracy, etc. Unsurprisingly then, a method designed for smooth functions, where conventional numerical differentiation tools (such as finite differencing) are used to provide the derivative information required by the solver, may fail. This is what is observed above in Figure \ref{GPOPS2vsLD, Gaussian, v_0=11}, as the solver breaks in some cases. On the other hand, the main advantage of our proposed method is that it directly treats nonsmooth functions in the system's dynamics and/or the objective functional by providing derivative information via LD-derivatives. And hence, while this approach can face other typical numerical issues (e.g., rounding error), we do not face this same potential for failure as the naive approach.}
  \ha{Lastly, we note that a performance comparison in terms of computational efficiency between  our LD-derivatives based solver and the smooth optimal control solver, GPOPS-II,  is  not appropriate at this stage; the numerical implementation based on our LD-derivatives based approach is a prototype, and its implementation was not focused on optimizing computational efficiencies, while GPOPS-II is a commercial powerful smooth optimal control solver that uses state-of-the-art techniques for solving a wide range of smooth optimal control problems.} 
\section{Conclusions}\label{section:VI}
In this paper, we provided a framework for a nonsmooth sequential optimal control approach to solve the WTPS control problem. \ha{Before summarizing,  we note that
the MATLAB files used to produce the simulations in subsections \ref{subsection:IIIC} and \ref{subsection:IVB}  are uploaded to GitHub as an open-source code \cite{githubwtps}. We note that the GPOPS-II software is a commercial software, so, although our implementation is open, the user will require GPOPS-II prior to running the m-file that uses GPOPS-II.} 
The \ha{key advantages of the proposed approach are summarized as follows:}
\begin{itemize}
    \item The proposed approach incorporates recent significant developments in WTPS modeling and computationally relevant nonsmooth optimal control theory for DAEs.
    \item  The proposed approach is applicable to WTPSs in all wind speed ranges (Regions 1-3) and achieves two targets simultaneously: (i) maximization of power during lower wind speeds; and (ii) maintaining the power at the rated level during higher wind speeds.
    \item We demonstrated the effectiveness of the proposed approach via simulations, including real-world data, and a comparison with alternative approaches, such as smoothing (which is lacking) and a naive approach (which fails).
    \item Having access to the feasible optimal control solution of the WTPS problem extends many possibilities in control developments and, more importantly, provides researchers/industry with a reference solution for comparisons between competing control methods; after all, all control methods should try to realize and be compared with the \textit{optimal control}.
    \item The current work can inspire similar approaches in other power systems in the form of \eqref{NonSmooth_DAE1}-\eqref{NonSmooth_DAE2}, especially renewable energy power systems which can experience nonsmooth conditions and need substantial control developments. 
\end{itemize}   

\appendix

\section{}\label{appendix}
\noindent Denotations of state and parameters in the WTPS model \eqref{WTPS_model}:

\noindent $\rho$, $A_r$, $v_{wind}$ - air density, rotor area $(m^2)$, wind speed (m/s). \\
$C_p$, $w_{ref}$: aerodynamic power coefficient, reference speed.\\
$\alpha_{i,j} \:, P_{mech}$: empirical constants, mechanical power.\\
$w_0 \:, w_{base}$: initialized speed constant, base angular frequency.\\
$P_{elec}$, $V$: electrical (active) power, terminal voltage. \\
$R$, $X$, $E$: infinite bus resistance, reactance, voltage.\\
$Q_{gen}$, $I_{plv}$: reactive power, active current. \\
$\lambda$, $\theta$: tip ratio, pitch angle in degrees. \\
$w_t\:,w_g$: dynamic turbine and generator speeds.\\
$\Delta\theta_m$: integral of difference between $w_t$ and $w_g$.\\
$f_1$: integral of differences of speeds.\\
$P_{stl}$: rated power.\\
$P_{inp}$, $P_{1elec}$: power order, filtered electrical power. \\
$T_{pc}$, $K_{ptrq}$: time constant, torque control proportional.\\
$V_{ref}$, $E_{qcmd}$: reference voltage, reactive voltage command.\\
$H$, $H_g$: turbine inertia constant, generator inertia constant.\\
$D_{tg}$, $K_{tg}$ - shaft damping constant, shaft stiffness constant.\\
$K_{itrq}$: torque control constant.\\
$K_{Qi}$: reference voltage gain.\\
$T_{pwr}$, $K_{vi}$, $E_q$: filtered electric power time constant, reactive voltage command time constant, generator reactive variable.\\

\noindent The values of the parameters involved, and the $C_p$ coefficients are given in Tables
\ref{tab:params}, and \ref{tab:cpcoeftable}, respectively \cite{eisa2019modeling,eisa2019nonlinear,IJDY17}.
\begin{table}[!h]
\caption{Representative values for parameters in the model}
\label{tab:params}
\begin{center}
\begin{tabular}{|c||c|}
\hline
Parameter & Value \\
\hline
$w_0$ & 1 (any choice bigger than 0)\\
\hline
$X_{eq},D_{tg} $ & 0.8,1.5\\
\hline
$K_{tg},w_{base}$  & 1.11,125.66  respectively \\
\hline
$\frac{1}{2}\rho A_r, K_b$  & 0.00159 and 56.6 respectively \\
\hline
$H$  &  4.94 \\
\hline
$H_g, K_{itrq}$  & 0.62 ,0.6 respectively\\
\hline
$T_{pc},K_{ptrq}$  & 0.05, 3 respectively \\
\hline
$T_{pwr},K_{Qi}$  & 0.05, 0.1 respectively\\
\hline
$P_{stl},w_{ref}$  & 1 , 1.2 respectively\\
\hline
$K_{vi}, R, E $  & 40, 0.02, 1.0164 respectively\\
\hline
$X=X_l+X_{tr} $  & $X_l=0.0243, X_{tr}=0.00557 $\\
\hline
\end{tabular}
\end{center}
\end{table}
\begin{table}[!ht]
\caption{$c_p$ coefficients $\alpha_{i,j}$}
\label{tab:cpcoeftable}
\begin{center}
\begin{tabular}{|c||c||c||c||c||c|}
\hline
i & j & $\alpha_{i,j}$ & i & j & $\alpha_{i,j}$ \\
\hline
4 & 4 & 4.9686e-10 & 4 & 3 & -7.1535e-8 \\
\hline
4 & 2 & 1.6167e-6 & 4 & 1 & -9.4839e-6 \\
\hline
4 & 0 & 1.4787e-5 & 3 & 4 & -8.9194e-8 \\
\hline
3 & 3 & 5.9924e-6 & 3 & 2 & -1.0479e-4 \\
\hline
3 & 1 & 5.7051e-4 & 3 & 0 & -8.6018e-4 \\
\hline
2 & 4 & 2.7937e-6 & 2 & 3 & -1.4855e-4 \\
\hline
2 & 2 & 2.1495e-3 & 2 & 1 & -1.0996e-2 \\
\hline
2 & 0 & 1.5727e-2 &-&-&-\\
\hline
1 & 4 & -2.3895e-5 & 1 & 3 & 1.0683e-3 \\
\hline
1 & 2 & -1.3934e-2 & 1 & 1 & 6.0405e-2 \\
\hline
1 & 0 & -6.7606e-2 & 0 & 4 & 1.1524e-5 \\
\hline
0 & 3 & -1.3365e-4 & 0 & 2 & -1.2406e-2 \\
\hline
0 & 1 & 2.1808e-1 & 0 & 0 & -4.1909e-1 \\
\hline
\end{tabular}
\end{center}
\end{table}
\clearpage

\bibliographystyle{elsarticle-num} 





\end{document}